\documentclass[a4paper, 12pt]{amsproc}
\allowdisplaybreaks[1]

\usepackage{amsmath, amsthm, amssymb, mathtools, mathrsfs, stmaryrd}
\usepackage{ascmac}
\usepackage{comment}
\usepackage{bm}
\usepackage{ytableau}

\usepackage{hyperref}

\usepackage{tikz}
\usepackage{graphicx}
\usepackage{here}
\usepackage{time}
\usepackage[abbrev]{amsrefs}

\usepackage{xcolor}
\usepackage[capitalize,nameinlink,noabbrev,nosort]{cleveref}
\hypersetup{
 colorlinks=true,       % false: boxed links; true: colored links
 linkcolor=blue,          % color of internal links
 citecolor=blue,        % color of links to bibliography
 filecolor=blue,      % color of file links
 urlcolor=blue,           % color of external links
}

\usepackage{fullpage}

\makeatletter
\@namedef{subjclassname@2020}{%
  \textup{2020} Mathematics Subject Classification}
\makeatother

\newtheorem{theoremcounter}{Theorem Counter}[section]

\theoremstyle{definition}

\newtheorem{remark}[theoremcounter]{Remark}

\theoremstyle{plain}
\newtheorem{lemma}[theoremcounter]{Lemma}
\newtheorem{proposition}[theoremcounter]{Proposition}
\newtheorem{corollary}[theoremcounter]{Corollary}

\newtheorem{theorem}[theoremcounter]{Theorem}

\numberwithin{equation}{section}

\newcommand{\Z}{\mathbb{Z}}
\newcommand{\Q}{\mathbb{Q}}
\newcommand{\R}{\mathbb{R}}
\newcommand{\C}{\mathbb{C}}

\DeclareMathOperator{\ReNew}{Re}
\renewcommand{\Re}{\ReNew}

\begin{document}
\author{Takashi Miyagawa}
\address[Takashi Miyagawa]{Onomichi City University,  1600-2 Hisayamada-cho, Onomichi, Hiroshima, 722-8506, Japan} 
\email{miyagawa@onomichi-u.ac.jp}

%\author{Hideki Murahara}
%\address[Hideki Murahara]{The University of Kitakyushu,  4-2-1 Kitagata, Kokuraminami-ku, Kitakyushu, Fukuoka, 802-8577, Japan}
%\email{hmurahara@mathformula.page}

%%%%%%%%%%%%%%%%%%%%%%%%%%%%%%%%%%%%%%%%%%%%%%%%%%%%%%%%%%%%
\subjclass[2020]{Primary 11M32, Secondary 11M35}

%%%%%%%%%%%%%%%%%%%%%%%%%%%%%%%%%%%%%%%%%%%%%%%%%%%%%%%%%%%%
\begin{abstract}
Due to their deep connection with the Riemann zeta function, 
the asymptotic behavior of mean values of multiple zeta functions 
has attracted considerable attention.

In this paper, we study the mean square values of Hurwitz-type 
and Barnes-type multiple zeta functions. 
For the Hurwitz-type multiple zeta function, 
we establish asymptotic formulas and upper bounds for its 
mean square values in terms of the parameter $\sigma$.
Our approach relies on the fact that Hurwitz-type multiple 
zeta functions can be expressed as linear combinations of 
the classical Hurwitz zeta function, which allows us to apply 
known results on the mean values of the latter almost directly.
For the Barnes-type multiple zeta function, 
we show that the behavior of the mean square values depends 
essentially on the arithmetic structure of the parameter vector.
In the case where the parameters are linearly dependent over $\Q$, 
we obtain asymptotic formulas analogous to the Hurwitz-type case. 
In contrast, for general parameters, we derive upper bounds 
for the mean square values from bounds for the function itself.
In particular, we clarify how the order of the mean square values 
varies in terms of the dimension of the $\Q$-vector space spanned by 
the parameters of the Barnes zeta function.
\end{abstract}

%%%%%%%%%%%%%%%%%%%%%%%%%%%%%%%%%%%%%%%%%%%%%%%%%%%%%%%%%%%%
\keywords{Hurwitz zeta function, Barnes multiple zeta function, mean value theorem}

%%%%%%%%%%%%%%%%%%%%%%%%%%%%%%%%%%%%%%%%%%%%%%%%%%%%%%
\title{Mean values and upper bounds for the Hurwitz and Barnes multiple zeta functions}

\maketitle

%%%%%%%%%%%%%%%%%%%%%%%%%%%%%%%%%%%%%%%%%%%%%%%%%%%%%%
\section{Introduction and main theorems}

Let $r $ be a positive integer and let $s=\sigma+it \ (\sigma,t\in \R)$ be a complex variable. Let $a>0$ and $w_1,\dots w_r>0$. 
The Barnes multiple zeta function, denoted by $\zeta_r(s,a,(w_1, \dots,w_r))$ and introduced in \cites{Barnes1899, Barnes1901, Barnes1904}, is defined as  
\begin{align}\label{zeta_r}
 \zeta_r (s,a,(w_1,\dots,w_r))
 =\sum_{m_1=0}^\infty \cdots \sum_{m_r=0}^\infty
  \frac{1}{(a+m_1 w_1+\cdots+m_r w_r)^s}.
\end{align}
In particular, for $(w_1,\dots,w_r)=(1,\dots,1)$, the series
\begin{align}\label{zeta_r(1,...,1)}
  \zeta_r(s,a,(1,\dots,1)) = \sum_{m_1=0}^\infty \cdots \sum_{m_r=0}^\infty
  \frac{1}{(a+m_1 +\cdots+m_r)^s}
\end{align}
is sometimes called the Hurwitz multiple zeta function.
The series \eqref{zeta_r} and \eqref{zeta_r(1,...,1)} are natural generalizations of the Hurwitz zeta function
\begin{align}\label{zeta_H}
  \zeta_H(s,a) = \sum_{m=0}^\infty \frac{1}{(m+a)^s} \quad (\sigma>1).
\end{align}
Furthermore, \eqref{zeta_r} and \eqref{zeta_r(1,...,1)} converge absolutely for $\mathrm{Re}(s) > r$ and can be meromorphically continued throughout the entire complex $s$-plane. 
In addition, these functions have simple poles at $s=1,2,\dots,r$.

For convenience in this paper, we introduce the following boldface symbols to represent index tuples:
\begin{align*}
 &\mathbf{m}=(m_1,\dots,m_r), \quad
  \mathbf{n}=(n_{1},\dots,n_{r}), \quad\\
 &\mathbf{w}=(w_1,\dots,w_r), \ \, \quad
  \mathbf{1}=(1,\dots,1).
\end{align*}
We assume that $w_1,\dots,w_r\in \mathbb{R}_{>0}$ throughout this paper.
Using this notation, \eqref{zeta_r} and \eqref{zeta_r(1,...,1)} can be rewritten as  
\begin{align}
 \zeta_r(s,a,\mathbf{w})
 =\sum_{m_1,\dots,m_r\ge 0}
 \frac{1}{(a+\mathbf{m}\cdot\mathbf{w})^s}
\end{align}
and
\begin{align}
 \zeta_r(s,a,\mathbf{1})
 =\sum_{m_1,\dots,m_r\ge 0}
 \frac{1}{(a+\mathbf{m}\cdot\mathbf{1})^s}, \label{HMZ}
\end{align}
respectively.
In particular, $\zeta_r(s,a,\mathbf{1})$ is referred to as the Hurwitz multiple zeta function, and it can be expressed as a linear combination of single Hurwitz zeta functions (see \cite{SrivastavaChoi2001} p. 86):  
\begin{align}  
  \zeta_r(s,a,\mathbf{1})=\sum_{j=0}^{r-1}p_{r,j}(a)\zeta_H(s-j, a). 
  \label{HMZ-HZ}
\end{align}
In this formula, $p_{r,j}(a)$ is given by  
\begin{align*}  
 p_{r,j}(a)=\frac{1}{(r-1)!}\sum_{l=j}^{r-1}(-1)^{r+1-j}\binom{l}{j}S(r,l+1) a^{l-j},  
\end{align*}  
where $S(r,l+1)$ denotes the Stirling number of the first kind.
Combining the above formula with known results on the mean square values of the Hurwitz zeta function, we obtain the following theorem concerning the order of the mean square value: 
% of  $\int_1^T|\zeta_r(\sigma+it,a,\mathbf{1})|^2 dt$ as $T \rightarrow \infty$ 

\begin{theorem}\label{zeta_r(1/2,a)}
Let $a>0, s=\sigma+it$ and assume $r-1<\sigma < r$. Then the order of the mean square values of $\zeta_r(\sigma+it,a,\mathbf{1})$ is given in the following three cases:
\begin{enumerate}
    \item[(i)] If $r-1/2< \sigma<r$,
    \begin{align*}
        \int_1^T |\zeta_r(\sigma+it,a,\mathbf{1})|^2dt&=
        \sum_{0 \le k,l \le r-1}p_{r,k}(a)p_{r,l}(a) \zeta_H(2\sigma-k-l,a)T \\
        & \quad + \frac{1}{\{(r-1)!\}^2} \cdot \frac{(2\pi)^{2\sigma-2r+1}}{2r-2\sigma} \zeta(2r-2\sigma)T^{2r-2\sigma}+O(T^{r-\sigma}\log{T}),
    \end{align*}
    \item[(ii)] if $\sigma=r-1/2$,
\begin{align*}
 &\int_1^T \left|\zeta_r\left(r-\frac{1}{2}+it,a,\mathbf{1}\right)\right|^2 dt \\
 & \quad = \frac{1}{\{(r-1)!\}^2} T \log{T} \\
 & \quad \qquad +\left\{
 \sum_{(k,l)\ne(r-1,r-1)} p_{r,k}(a)p_{r,l}(a) \zeta_H(2\sigma-k-l,a)+
 \frac{\gamma(a)+\gamma-1-\log{(2\pi)}}{((r-1)!)^2}\right\}T \\
 & \quad \qquad  +O\big(T^{1/2}\log{T}\big),
\end{align*}
    \item[(iii)]
    if $r-1<\sigma<r-1/2$, 
    \begin{align*}
    \int_1^T|\zeta_r(\sigma+it,a,\mathbf{1})|^2 dt 
    & =\frac{1}{\{(r-1)!\}^2}\frac{(2\pi)^{2\sigma-2r+1}}{2r-2\sigma}\zeta(2r-2\sigma)T^{2r-2\sigma} \\
    & \quad +\sum_{0 \le k,l \le r-1}p_{r,k}(a)p_{r,l}(a) \zeta_H(2\sigma-k-l,a)T +O(T^{r-\sigma}\log{T}).
    %+O\big(T^{2r-2\sigma-1/2}\big).
\end{align*}
\end{enumerate}
as $ T \to \infty $. Here $\gamma$ denotes Euler's constant, and $\gamma(a)$ the generalized Euler constant, defined by
\[
    \gamma=\lim_{M \rightarrow \infty} \left( \sum_{m=1}^M \frac1{m} -\log{M} \right), \quad \gamma(a) = \lim_{M \rightarrow \infty} \left\{ \sum_{m=0}^M \frac1{m+a} -\log(M+a) \right\}.
\]
\end{theorem}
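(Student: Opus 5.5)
We use the decomposition \eqref{HMZ-HZ}, $\zeta_r(s,a,\mathbf{1})=\sum_{j=0}^{r-1}p_{r,j}(a)\zeta_H(s-j,a)$, and expand the square:
\[
|\zeta_r(\sigma+it,a,\mathbf{1})|^2=\sum_{0\le k,l\le r-1}p_{r,k}(a)p_{r,l}(a)\,\zeta_H(\sigma-k+it,a)\overline{\zeta_H(\sigma-l+it,a)} .
\]
The coefficients $p_{r,k}(a)$ are real because $a>0$. Since $r-1<\sigma<r$, the shifted real parts are $\sigma_j:=\sigma-j$ with $\sigma_j>1$ for $j\le r-2$, while $\sigma_{r-1}=\sigma-r+1\in(0,1)$. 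From the formula for $p_{r,j}$ with the Stirling number $S(r,r)=1$, the leading coefficient is $p_{r,r-1}(a)=\pm 1/(r-1)!$, so $p_{r,r-1}(a)^2=1/\{(r-1)!\}^2$. This is exactly the factor multiplying the main terms that come from the critical-strip piece.

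\textbf{Step 1 (diagonal critical term).} The term $k=l=r-1$ is $\{(r-1)!\}^{-2}\int_1^T|\zeta_H(\sigma'+it,a)|^2dt$ with $\sigma'=\sigma-r+1\in(0,1)$. We insert the known mean square results for the Hurwitz zeta function, which we take as quoted from the literature (Andersson, Katsurada--Matsumoto type results). For $1/2<\sigma'<1$ these give
\[
\int_1^T|\zeta_H(\sigma'+it,a)|^2dt=\zeta_H(2\sigma',a)T+\frac{(2\pi)^{2\sigma'-1}}{2-2\sigma'}\zeta(2-2\sigma')T^{2-2\sigma'}+O(T^{1-\sigma'}\log T).
\]
For $\sigma'=1/2$ they give $T\log T+(\gamma(a)+\gamma-1-\log 2\pi)T+O(T^{1/2}\log T)$, and for $0<\sigma'<1/2$ the same two-term shape as above. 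Substituting $\sigma'=\sigma-r+1$ gives $2-2\sigma'=2r-2\sigma$ and $2\sigma'-1=2\sigma-2r+1$. This produces precisely the displayed secondary terms, and the term $\zeta_H(2\sigma-2r+2,a)T$ in cases (i) and (iii).

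\textbf{Step 2 (off-diagonal and absolutely convergent terms).} Take any pair $(k,l)\ne(r-1,r-1)$, so that at least one factor has real part $>1$. We expand both Dirichlet series (for the critical factor, we use the approximate functional equation $\zeta_H(s,a)=\sum_{0\le m\le x}(m+a)^{-s}+O(x^{-\sigma'})$ with $x\asymp T$) and integrate term by term. The diagonal $m=n$ contributes
\[
T\sum_m (m+a)^{-(2\sigma-k-l)}=\zeta_H(2\sigma-k-l,a)\,T .
\]
Here $2\sigma-k-l>1$ whenever $(k,l)\ne(r-1,r-1)$. The off-diagonal $m\ne n$ terms contribute
\[
\sum_{m\ne n}\frac{1}{(m+a)^{\sigma-k}(n+a)^{\sigma-l}\,|\log((m+a)/(n+a))|} .
\]
We bound this sum by the standard Montgomery--Vaughan/Hilbert inequality or by a direct dyadic estimate. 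Since one exponent exceeds 1 and the other exceeds 0, the bound is $O(T^{1-\sigma'}\log T)=O(T^{r-\sigma}\log T)$ at worst, and this is $O(T^{1/2}\log T)$ when $\sigma=r-1/2$. The error $O(x^{-\sigma'})$ from the approximate formula contributes at most $O(T^{1-\sigma'})$ after Cauchy--Schwarz.

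\textbf{Step 3 (assembly).} We sum all the pieces. In case (ii), the constant of the critical term merges with $\sum_{(k,l)\ne(r-1,r-1)}p_{r,k}p_{r,l}\zeta_H(2\sigma-k-l,a)$ in the coefficient of $T$. Cases (i) and (iii) differ only in which term dominates. The main obstacle is Step 2: controlling the mixed cross terms with a critical factor uniformly enough to keep the error at $O(T^{r-\sigma}\log T)$. We expect the hardest configuration to be $\sigma'$ near $0$, where the critical Dirichlet polynomial has length about $T$ and its terms barely decay. If the direct estimate is too weak there, we would instead apply Cauchy--Schwarz, combining the Step 1 asymptotic with the bounded mean square $O(T)$ of the absolutely convergent factor. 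This yields the cruder bound $O(T^{(1+2r-2\sigma)/2})$, so sharpening it requires the Hilbert-inequality argument.
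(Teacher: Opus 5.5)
Your proposal is correct and follows the paper's proof: you expand via \eqref{HMZ-HZ}, insert the quoted Hurwitz mean-square formulas for the $k=l=r-1$ term, and treat every other pair by a diagonal/off-diagonal split of truncated Dirichlet series, as in Lemma~\ref{lem:hz_mixed_mean}; your direct absolute bound $O(T^{r-\sigma})$ for the off-diagonal part is cruder than the paper's case-by-case bounds but suffices for the stated error. One small fix: with a fixed truncation $x\asymp T$ you may not drop the term $x^{1-s}/(s-1)$ of \eqref{Hurwitz_finite}, which has size about $T^{r-\sigma}/t$ for small $t$ (the paper takes $x=t$ to avoid this), but keeping it only adds $O(T^{r-\sigma}\log T)$, which is within your error budget.
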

\bigskip

 \begin{remark}
     We remark that some of the diagonal contributions in case (iii) of the above theorem involve values of the Hurwitz zeta function at points outside the Dirichlet series region $\Re (s)>1$.
 More precisely, for the pair $k=l=r-1$ we have $2\sigma-(k+l)=2\sigma-2r+2\in(0,1)$, so the series $\sum_{m\ge0}(m+a)^{-2\sigma+2r-2}$ diverges.
 Equivalently, one may obtain the same contribution by using the standard finite sum approximation \eqref{Hurwitz_finite}
     $$\zeta_H(s,a)=\sum_{m\le x}(m+a)^{-s}+\frac{x^{1-s}}{s-1}+O(x^{-\sigma}),$$
 with $x\asymp T$
 , and then exchange the order of summation and integration.
 This procedure yields the $T$-term $\zeta_H(2\sigma-2r+2,a)T$, together with the secondary power term 
         $$\dfrac{(2\pi)^{2\sigma-2r+1}}{2r-2\sigma}\zeta(2r-2\sigma)T^{2r-2\sigma}$$ 
    as appears in the text.
 \end{remark}

\bigskip

On the other hand, the mean square value of the Barnes multiple zeta function
$\zeta_r(s,a,\mathbf{w})$ depends sensitively on the arithmetic structure of the
parameter vector $\mathbf{w}=(w_1,\dots,w_r)$.
In \cite{MiyagawaMurahara2025}, it was shown in the case $d=1$ that
$
\int_1^T \zeta_r(\sigma+it,a,\mathbf{w})\,dt
$
is given by a linear function of $T$ in the range $\sigma>r-1/2$.
In the case where the components of $\mathbf{w}$ are $\mathbb{Q}$-linearly dependent
(i.e.\ $d=1$), the values of $\mathbf{m}\cdot\mathbf{w}$ exhibit strong multiplicities,
and the problem can be reduced to the Hurwitz-type case, leading to precise asymptotic formulas.

In the present paper, we extend this analysis to the range $r-1<\sigma\le r-1/2$.
In contrast, when $2 \le d \le r$, such a reduction is no longer available.
The main difficulty in the Barnes-type case lies in the treatment of
off-diagonal terms, which cannot be controlled by standard methods
when $d\ge 2$.
In particular, the off-diagonal terms become much more difficult to control,
and at present we restrict ourselves to upper bounds obtained from
general estimates for $\zeta_r(s,a,\mathbf{w})$.
Thus, while the $\mathbb{Q}$-rank
$d=\dim_{\mathbb{Q}}\langle w_1,\dots,w_r\rangle$
plays a fundamental role, our results provide a complete description only in the
case $d=1$, and more limited estimates in the higher-rank cases.
This leads to the following theorem.

\bigskip

% \red{
% \begin{theorem}\label{main4_revised_v3}
% Let $0 < a \le 1$, $s = \sigma + it$, and
% $d = \dim_{\mathbb{Q}} \langle w_1, \dots, w_r \rangle $.
% Then the mean square values of $\zeta_r(\sigma+it, a, \mathbf{w})$ satisfy the following estimates.
% \begin{enumerate}
%     \item[(i)] If $d=1$, then there exist $\lambda > 0$ and integers $p_1, \dots, p_r \in \mathbb{Z}_{>0}$ such that $w_j = \lambda p_j \ (1 \le j \le r)$ with $\gcd(p_1, \dots, p_r) = 1$.
%     For $\sigma = r-1/2$, we have
%     \begin{align*}
%         \int_1^T \left|\zeta_r\left(r-\frac{1}{2}+it, a, \mathbf{w}\right)\right|^2 dt = \frac{\lambda^{1-2r}}{\{(r-1)!\}^2 (p_1 \cdots p_r)^2} T \log T + O(T).
%     \end{align*}
%     For $r-1 < \sigma < r-1/2$, we have
%     \begin{align*}
%         \int_1^T |\zeta_r(\sigma+it, a, \mathbf{w})|^2 dt 
%         %= \frac{\lambda^{1-2r}}{\{(r-1)!\}^2 (p_1 \cdots p_r)^2 (2r-2\sigma-1)} T^{2r-2\sigma} + o(T^{2r-2\sigma}).
%         \asymp T^{2r-2\sigma}.
%     \end{align*}
%     \item[(ii)] If $2\le d\le r$ and the associated vector $\mathbf v=(v_1,\dots,v_d)$
% may be chosen so that $1,v_1,\dots,v_d$ are linearly independent algebraic numbers
% over $\mathbb Q$, then for any $\varepsilon>0$ and any $\sigma>r-1$, we have
% \[
% \int_1^T |\zeta_r(\sigma+it,a,\mathbf w)|^2dt
% \ll T^{2r-2\sigma+d+\varepsilon}.
% \]
% \end{enumerate}
% In all cases, the implied constants may depend on $r,\sigma,a,\mathbf{w}$ and $\varepsilon$.
% \end{theorem}
% }

\begin{theorem}\label{main4_revised_v3}
Let $a>0, s = \sigma + it$ and
$d = \dim_{\mathbb{Q}} \langle w_1, \dots, w_r \rangle$.
Then the mean square values of $\zeta_r(\sigma+it, a, \mathbf{w})$ satisfy the following.
\begin{enumerate}
\item[(I)] The case $d=1$. There exist $\lambda > 0$ and integers
$p_1, \dots, p_r \in \mathbb{Z}_{>0}$ such that
$w_j = \lambda p_j \ (1 \le j \le r)$ with
$\gcd(p_1, \dots, p_r) = 1$.
\begin{enumerate}
\item[(i)] If $\sigma = r-1/2$, then
\[
\int_1^T \left|\zeta_r\left(r-\frac{1}{2}+it, a, \mathbf{w}\right)\right|^2 dt
=
\frac{\lambda^{1-2r}}{\{(r-1)!\}^2 (p_1 \cdots p_r)^2}
\, T \log T
+ O(T).
\]
\item[(ii)] If $r-1 < \sigma < r-1/2$, then
\[
\int_1^T |\zeta_r(\sigma+it, a, \mathbf{w})|^2 dt
\asymp T^{2r-2\sigma}.
\]
\end{enumerate}
\item[(II)] The case $2 \le d \le r$.
For any $\mathbf w \in \mathbb{R}_{>0}^r$ with $2 \le d \le r$, we have the following:
\[
\int_1^T |\zeta_r(\sigma+it,a,\mathbf w)|^2dt \ll
    \begin{cases}
    T & \text{if } \sigma>r,\\[6pt]
    T (\log T)^2 & \text{if } \sigma= r,\\[6pt]
    T^{2r-2\sigma+1} & \text{if } r-1<\sigma<r.
    \end{cases}
\]
\end{enumerate}
In all cases, the implied constants may depend on
$r,\sigma,a,$ and $\mathbf{w}$.
\end{theorem}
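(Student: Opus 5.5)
For (I), I will reduce to the Hurwitz-type situation. Write $w_j=\lambda p_j$. Then $\zeta_r(s,a,\mathbf w)=\lambda^{-s}\sum_{\mathbf m}(a/\lambda+\mathbf m\cdot\mathbf p)^{-s}$. Split each $m_j$ as $m_j=p'_j q_j+b_j$ with $0\le b_j<P/p_j$, where $P=p_1\cdots p_r$ and $p'_j=P/p_j$. Then $\mathbf m\cdot\mathbf p = P(q_1+\cdots+q_r)+\mathbf b\cdot\mathbf p$. This gives a finite decomposition
\[
\zeta_r(s,a,\mathbf w)=(\lambda P)^{-s}\sum_{\mathbf b}\zeta_r\Bigl(s,\frac{a/\lambda+\mathbf b\cdot\mathbf p}{P},\mathbf 1\Bigr).
\]
The sum has $\prod_j p'_j=P^{r-1}$ terms. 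By \eqref{HMZ-HZ}, each summand is a combination $\sum_j p_{r,j}(\alpha_{\mathbf b})\zeta_H(s-j,\alpha_{\mathbf b})$. The dominant piece for $r-1<\sigma\le r-1/2$ is the one with $j=r-1$, where $p_{r,r-1}=1/(r-1)!$.

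So the mean square becomes $|\lambda P|^{-2\sigma}\int_1^T|F(s)|^2dt$, where $F=\frac{1}{(r-1)!}\sum_{\mathbf b}\zeta_H(s-r+1,\alpha_{\mathbf b})+\text{(lower $j$ terms)}$. The lower terms are $\zeta_H$ at real part $\sigma-j\ge\sigma-r+2>1$, hence bounded, and by Cauchy--Schwarz they contribute at most $O(T)$ plus cross terms. The main work is computing $\int_1^T|\sum_{\mathbf b}\zeta_H(u+it,\alpha_{\mathbf b})|^2dt$ with $u=\sigma-r+1\in(0,1/2]$.

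For this I will use the approximate formula quoted in the Remark, $\zeta_H(s,\alpha)=\sum_{n\le x}(n+\alpha)^{-s}+x^{1-s}/(s-1)+O(x^{-u})$ with $x\asymp T$ (or dyadically $x\asymp t$). I then square out and integrate term by term, in the Montgomery--Vaughan style.
\begin{itemize}
\item The diagonal terms come from coinciding frequencies $n+\alpha_{\mathbf b}=n'+\alpha_{\mathbf b'}$. After rescaling by $\lambda P$, these are exactly the distinct values of $a+\mathbf m\cdot\mathbf w$ counted with multiplicity. The number of $\mathbf m$ with $\mathbf m\cdot\mathbf p=N$ is $N^{r-1}/((r-1)!P)+O(N^{r-2})$.
\item At $\sigma=r-1/2$ the diagonal is $\sum_{N\le x}N^{2r-2}/((r-1)!P)^2\cdot N^{-2\sigma}\cdot T$. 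Together with the $\lambda^{-2\sigma}$ factor this gives the leading constant $\lambda^{1-2r}/(\{(r-1)!\}^2P^2)\,T\log T$. I will check the $\lambda$ power by rescaling $t\mapsto\lambda t$ in the logarithm, which only affects the $O(T)$ term.
\item For $\sigma<r-1/2$ the same diagonal sum gives order $T^{2r-2\sigma}$ with a positive constant, which yields the lower bound.
\item The off-diagonal terms have frequency gaps $\gg 1/P$, since all frequencies lie in $(\lambda P)^{-1}$-spaced sets of logs of an arithmetic progression. Montgomery--Vaughan's Hilbert inequality then bounds them by $\sum_N N\cdot N^{2r-2-2\sigma}\ll T^{2r-2\sigma}$, which is $O(T)$ at $\sigma=r-1/2$ and $O(T^{2r-2\sigma})$ otherwise. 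This gives the upper bound in (ii).
\end{itemize}
The main obstacle is this off-diagonal bookkeeping near $\sigma=r-1/2$, in particular keeping the multiplicity weights and the correct gap size. I will handle it by grouping terms by the value $N$ first, so that the problem becomes a single Dirichlet polynomial with coefficients $\asymp N^{r-1-\sigma}$.

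For (II), I avoid off-diagonal analysis entirely and use pointwise bounds. For $\sigma>r$ the series converges absolutely, so $|\zeta_r|\ll1$ and the integral is $\ll T$. For $r-1<\sigma\le r$, the paper's general estimate (or a direct Euler--Maclaurin argument applied in the variable $m_r$, with the other $r-1$ variables summed trivially) gives
\[
\zeta_r(\sigma+it,a,\mathbf w)\ll |t|^{r-\sigma}
\]
for $r-1<\sigma<r$, and $\ll\log|t|$ at $\sigma=r$. To get this, I will write $\zeta_r$ as a truncated sum over $\mathbf m\cdot\mathbf w\le C|t|$ plus an integral remainder. The partial sum is bounded trivially by $\sum_{X\le C|t|}X^{r-1-\sigma}\ll|t|^{r-\sigma}$, and the tail is handled by one partial integration in a variable, which saves a factor of $|t|$. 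Squaring and integrating these bounds gives $T^{2r-2\sigma+1}$ and $T(\log T)^2$ respectively.
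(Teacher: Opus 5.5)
Part (II) is the paper's argument: you square and integrate the pointwise bounds of Theorem~\ref{thm:Barnes_bounds}. Your handling of (I)(i) and of the upper bound in (I)(ii) is also sound. The splitting $m_j=p'_jq_j+b_j$ is a more elementary substitute for the quasi-polynomial decomposition of Lemma~\ref{lem:rank1_hurwitz_decomp_explicit}. Regrouping into the single polynomial $\sum_N A(N)(a+\lambda N)^{-s}$ on dyadic blocks, before applying the Montgomery--Vaughan mean value theorem, treats all shifts at once. That regrouping matters, because Cauchy--Schwarz on the top/lower cross terms would only give $O(T\sqrt{\log T})$ at $\sigma=r-1/2$.

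The gap is the \emph{lower} bound in (I)(ii). For $r-1<\sigma<r-1/2$ your own estimate makes the off-diagonal part $O(T^{2r-2\sigma})$. That is exactly the order of the diagonal $T\sum_{N\ll T}A(N)^2(a+\lambda N)^{-2\sigma}\asymp T^{2r-2\sigma}$. So positivity of the diagonal constant proves nothing, since the off-diagonal terms may cancel it.

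This is not an artifact of a crude bound. The one-sided approximation needs $x\ge t/\pi$, so the polynomial has length $\asymp t$. The Montgomery--Vaughan error $\sum_N N|c_N|^2(a+\lambda N)^{-2\sigma}$ is then genuinely comparable with $T\sum_N |c_N|^2(a+\lambda N)^{-2\sigma}$. Already for $r=1$, truncating at $x=ct$ gives a diagonal constant proportional to $c^{1-2\sigma}$, which depends on the arbitrary $c$. The true leading constant in Proposition~\ref{Hurwitz_0<siga<1/2} does not depend on $c$, so the off-diagonal terms really contribute at top order.

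What is missing is the functional equation, which is how the paper gets the order $T^{2r-2\sigma}$ (through Proposition~\ref{Hurwitz_0<siga<1/2}). In your setup it can be inserted cleanly:
\begin{itemize}
\item Since $\gcd(p_1,\dots,p_r)=1$, the map $\mathbf b\mapsto \mathbf b\cdot\mathbf p \bmod P$ is a surjective homomorphism $\prod_j \mathbb{Z}/p'_j\mathbb{Z}\to\mathbb{Z}/P\mathbb{Z}$. Hence each residue class is attained exactly $P^{r-2}$ times.
\item Combine this with $\zeta_H(s,\alpha+1)=\zeta_H(s,\alpha)-\alpha^{-s}$ and the distribution relation $\sum_{k=0}^{P-1}\zeta_H(s,(\beta+k)/P)=P^{s}\zeta_H(s,\beta)$. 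Your top-shift part then collapses to $\frac{\lambda^{-s}}{(r-1)!\,P}\,\zeta_H(s-r+1,a/\lambda)$ plus a bounded finite Dirichlet polynomial.
\item Put $u=\sigma-r+1\in(0,1/2)$. Proposition~\ref{Hurwitz_0<siga<1/2} then gives a main term $\asymp T^{2-2u}=T^{2r-2\sigma}$ with a positive constant.
\item The lower shifts have mean square $O(T)$. Their cross terms with the top part are $O(T^{3/2-u})=o(T^{2r-2\sigma})$ by Cauchy--Schwarz.
\end{itemize}
Applying Proposition~\ref{Hurwitz_0<siga<1/2} separately to each $\zeta_H(s-r+1,\alpha_{\mathbf b})$ would not be enough. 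The cross terms between distinct $\alpha_{\mathbf b}$ are also of order $T^{2r-2\sigma}$, because on the dual side of the functional equation their series share the frequencies $\log m$.
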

In particular, these bounds show that, in contrast to the rank-one case,
no power saving beyond the trivial bound is currently available in higher rank.
This demonstrates a sharp contrast with the case $d=1$,
where precise asymptotics are available.

%If $2\le d\le r$ and the associated vector $\mathbf v$ consists of algebraic numbers satisfying $1,v_1,\dots,v_d$ linearly independent over $\mathbb Q$, then ...

\bigskip

\begin{remark}\label{remark:Caw_residue}
Assume that $d=1$ and write $w_j=\lambda p_j$ as in Theorem ~\ref{main4_revised_v3}.
Let
$
A(n)
=
\#\left\{\mathbf m\in\mathbb Z_{\ge0}^r:\ p_1m_1+\cdots+p_rm_r=n\right\}.
$
Then the diagonal series can be written as
\[
\tilde{\zeta}_r(\sigma,a,\mathbf w)
=
\sum_{\substack{m_1,\dots,m_r\ge 0 \\ n_1,\dots,n_r\ge 0 \\ \mathbf m \cdot \mathbf{w} = \mathbf n \cdot \mathbf{w}}}\frac{1}{(a+\mathbf{m}\cdot\mathbf{w})^{\sigma}(a+\mathbf{n}\cdot\mathbf{w})^{\sigma}}
=
\sum_{n\ge0}\frac{A(n)^2}{(a+\lambda n)^{2\sigma}}.
\]
Formally, the constant $C(a,\mathbf w)$ corresponds to the logarithmic divergence
\[
C(a,\mathbf w)
=
\lim_{X\to\infty}\frac{1}{\log X}
\sum_{n\le X}\frac{A(n)^2}{(a+\lambda n)^{2r-1}},
\]
whenever the limit exists. Moreover, if $\tilde{\zeta}_r(\sigma,a,\mathbf w)$ admits
a meromorphic continuation to a neighborhood of $\sigma=r-1/2$ with a simple pole there,
then $C(a,\mathbf w)$ equals the residue of $\tilde{\zeta}_r(\sigma,a,\mathbf w)$ at $\sigma=r-1/2$. 
\end{remark}

\bigskip

To establish the upper bounds in the higher-rank case $2 \le d \le r$,
we will later use general growth estimates for $\zeta_r(s,a,\mathbf{w})$ (in Theorem\ref{thm:Barnes_bounds}),
which are stated after the main theorem.

\bigskip

\begin{theorem}\label{thm:Barnes_bounds}
Let $s=\sigma+it$ with $t\ge 2$, $a>0$, and
$\mathbf{w}\in\mathbb{R}_{>0}^r$.
Then the following bounds hold:
\[
\zeta_r(\sigma+it,a,\mathbf{w}) \ll 
\begin{cases}
1 
& \text{if } \quad   \sigma> r,\\[6pt]
\log t
& \text{if } \quad  \sigma=r,\\[6pt]
t^{\,r-\sigma}
& \text{if } \quad  r-1< \sigma < r,
\end{cases}
\]
as $t \to \infty$, uniformly for $\sigma$ in any fixed compact subinterval
of each region.
The implied constants depend on $a$, $r$, $\mathbf{w}$, and the chosen strip.
\end{theorem}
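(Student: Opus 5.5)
The plan is to bound the Barnes zeta function by reducing it, through a partial summation or Euler--Maclaurin step in one variable, to a Hurwitz zeta function whose convexity bound is known. For $\sigma>r$ the series \eqref{zeta_r} converges absolutely, so $|\zeta_r(s,a,\mathbf w)|\le \zeta_r(\sigma,a,\mathbf w)\ll 1$ uniformly on compact subintervals. The real work is in the strip $r-1<\sigma\le r$, which I would handle by induction on $r$. Write
\[
\zeta_r(s,a,\mathbf w)=\sum_{m_1,\dots,m_{r-1}\ge0} w_r^{-s}\,\zeta_H\!\Bigl(s,\tfrac{a+m_1w_1+\cdots+m_{r-1}w_{r-1}}{w_r}\Bigr),
\]
and use, for the inner Hurwitz zeta function, the uniform finite-sum approximation \eqref{Hurwitz_finite} with cut-off $x\asymp t$.

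For the case $r=1$, this is the classical convexity estimate: $\zeta_H(s,b)\ll t^{1-\sigma}$ for $0<\sigma<1$ and $\ll\log t$ at $\sigma=1$. The point is that the estimate must be uniform in $b\ge b_0>0$ with a usable dependence on $b$. I would use the approximation in the form
\[
\zeta_H(s,b)=\sum_{0\le m\le x}(m+b)^{-s}+\frac{(x+b)^{1-s}}{s-1}+O\big((x+b)^{-\sigma}\big),
\]
which holds uniformly for $x\ge C|t|$. Since the shift $b$ grows with $\mathbf m$, I would instead take $x$ so that $x+b\asymp t+b$.

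In the inductive step I would avoid an iterated single-variable reduction, which loses uniformity, and work directly with the $r$-fold sum. Split the lattice points according to whether $N=\mathbf m\cdot\mathbf w\le Ct$ or $N>Ct$:
\begin{enumerate}
\item[(a)] For the truncated sum over $N\le Ct$, bound trivially: the number of $\mathbf m$ with $\mathbf m\cdot\mathbf w\in[N,N+1)$ is $\ll (N+1)^{r-1}$, so
\[
\sum_{\mathbf m\cdot\mathbf w\le Ct}(a+\mathbf m\cdot\mathbf w)^{-\sigma}\ll\sum_{n\le Ct}n^{r-1-\sigma},
\]
which is $\ll t^{r-\sigma}$ for $\sigma<r$ and $\ll\log t$ for $\sigma=r$.
\item[(b)] For the tail $N>Ct$, substitute the integral. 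Apply Euler--Maclaurin (van der Corput's first-derivative argument) in the last variable $m_r$, with $m_1,\dots,m_{r-1}$ fixed. When $a'+m_rw_r\ge Ct$ the phase $t\log(a'+uw_r)$ has derivative $\le 1/2$, so the sum over $m_r$ beyond the cut-off equals $\frac{(a'+Mw_r)^{1-s}}{w_r(s-1)}+O((a'+Mw_r)^{-\sigma})$, where $M$ is the first index past the cut-off and $a'=a+m_1w_1+\cdots+m_{r-1}w_{r-1}$.
\end{enumerate}

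Summing the error terms from (b) over $m_1,\dots,m_{r-1}$ with $a'\le Ct$ gives $\ll t^{r-\sigma}$, by the same count as in (a) carried out in dimension $r-1$. When $a'>Ct$ the whole $m_r$-sum is of the tail type, and one must continue recursively: the boundary term is $(a')^{1-s}/(s-1)$, and summing it over $(m_1,\dots,m_{r-1})$ gives $\frac{w_r^{-1}}{s-1}\,\zeta_{r-1}(s-1,a,\cdot)$ restricted to $a'>Ct$. That is again a tail of exactly the same shape, with $(r,\sigma)$ replaced by $(r-1,\sigma-1)$ and a factor $1/|t|$. By induction this tail is $\ll t^{-1}\,t^{(r-1)-(\sigma-1)}=t^{r-1-\sigma}$. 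Here the induction hypothesis must be formulated for tails $\sum_{\mathbf m\cdot\mathbf w>Ct}$, which is the natural form in any case. The boundary terms at $M$ for $a'\le Ct$ are each $\ll t^{1-\sigma}/t$, and there are $\ll t^{r-1}$ of them, for a total of $\ll t^{r-1-\sigma}$. This needs $\sigma-1>r-2$, which is exactly the hypothesis $\sigma>r-1$ and makes the recursion close.

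The main obstacle is to keep every error term uniform in the shift $a'$, and to make the tail-version induction hypothesis precise, including the sharp-cut-off boundary terms when the hyperplane $\mathbf m\cdot\mathbf w=Ct$ is irrational. To handle the boundary I would use a smooth cut-off weight in $N$, or cut only in the last variable as above, so that only one-dimensional boundaries ever occur. No arithmetic property of $\mathbf w$ is used anywhere, which is consistent with the statement holding for all $\mathbf w$.
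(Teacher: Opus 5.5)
Your proposal is correct in outline, but it obtains the truncation step by a different route from the paper.

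\textbf{The paper's route.} The paper proves nothing about tails itself. It imports the box-truncated formula of Lemma~\ref{MiyaMura_Thm1.2} from the earlier Miyagawa--Murahara paper and specializes it to $x=t$, which gives $\zeta_r(s,a,\mathbf w)=\sum_{0\le m_j\le t}(a+\mathbf m\cdot\mathbf w)^{-s}+O(t^{r-1-\sigma})$. It then bounds this finite sum trivially, after comparing $a+\mathbf m\cdot\mathbf w$ with $w_{\min}(a/w_{\min}+\mathbf m\cdot\mathbf 1)$ and counting $N=m_1+\cdots+m_r$. That last counting step is exactly your step (a).

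\textbf{Your route.} You prove a simplex-truncated statement from scratch: the part with $\mathbf m\cdot\mathbf w>Ct$ is small, and your pieces in fact give $\ll t^{r-1-\sigma}$, better than the $t^{r-\sigma}$ you wrote for the error terms. You do this by induction on $r$, using only the one-dimensional Kusmin--Landau/van der Corput comparison in $m_r$. The recursion $(r,s)\mapsto(r-1,s-1)$ closes exactly because $\sigma>r-1$.

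\textbf{What each buys.} The paper's argument is short but rests on an external theorem. Moreover, its Corollary~\ref{cor:trunc_t} is stated only for $\sigma<r$, so the case $\sigma=r$ really needs Lemma~\ref{MiyaMura_Thm1.2} directly. Your argument is self-contained, avoids inclusion--exclusion over box faces, and treats $\sigma=r$ on the same footing as the open strip.

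\textbf{The point you flag but leave open.} For $\sigma\le r$ neither the $r$-fold tail nor $\sum_{a'>Ct}(a')^{1-s}$ converges. So a ``tail'' must be defined as the zeta function minus the partial sum, and your recursive identity must be proved for $\sigma>r$, with the cut-off held fixed, and then continued analytically. This goes through. Write the $a'>Ct$ part as $\sum_{a'>Ct}\bigl(F(a',s)-\frac{(a')^{1-s}}{w_r(s-1)}\bigr)+\frac{1}{w_r(s-1)}\sum_{a'>Ct}(a')^{1-s}$, where $F(a',s)=\sum_{m_r\ge0}(a'+m_rw_r)^{-s}$. Euler--Maclaurin gives the bracket $\ll|s|(a')^{-\sigma}$ uniformly in $a'$. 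Hence the first sum converges locally uniformly for $\sigma>r-1$, and the second sum is the continued $(r-1)$-dimensional tail at $s-1$, to which your induction hypothesis applies.

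A minor terminological point: the bound $t^{1-\sigma}$ for $r=1$ is the trivial bound coming from the approximate formula, not the convexity bound, which would be $t^{(1-\sigma)/2}\log t$.
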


\bigskip

\section{Preliminaries on the Hurwitz and Lerch zeta functions}
 Next, we state some fundamental facts about estimation of the Hurwitz zeta function.
For $0<\sigma_0\le \sigma \le 2$, $x\ge |t|/\pi$ and $s=\sigma+it$, we have
\begin{align}\label{Hurwitz_finite}
\zeta_H(s,a)=\sum_{m\le x}\frac{1}{(m+a)^s}+\frac{x^{1-s}}{s-1}+O(x^{-\sigma}),
\end{align}
uniformly as $|t|\to\infty$. Then we have
\begin{align*}
    & \zeta_H(1+it, a) = \sum_{m\le t}\frac{1}{(m+a)^{1+it}}+O(t^{-1}) \ll \sum_{m \le t}\frac1{m+a} \ll \log{t}
    \quad (t \ge 2).
\end{align*} 
%The following bound follows from the functional equation of the Hurwitz zeta function together with the Phragmén--Lindelöf convexity principle; see \cite{Apostol1976,Titchmarsh1986}.
%Also, by applying the Phragmén–Lindelöf convexity principle, we obtain the estimate
Also, by applying the Phragmén--Lindelöf convexity principle
in the same way as for the Riemann zeta function
(see, for example, \cite{Ivic1985,Titchmarsh1986}),
we obtain the estimate
\begin{align}\label{Hurwitz_estimate}
    \zeta_H(\sigma+it,a) \ll 
    \begin{cases}
        1 & \text{if } \quad \sigma> 1,\\
        t^{(1-\sigma)/2} \log{t} & \text{if } \quad 0 \le \sigma \le 1,\\
        t^{1/2-\sigma} \log{t} & \text{if }   \quad \sigma \le 0,
    \end{cases}
\end{align}
as $ |t| \rightarrow \infty $. 
By combining \eqref{HMZ-HZ} and \eqref{Hurwitz_estimate}, we obtain following proposition.
\begin{proposition}\label{thm:HMZ_growth}
Let $s=\sigma+it$ with $t\ge 2$ and let $a>0$ be fixed. Then we have the following uniform bounds
\[
 \zeta_r(\sigma+it,a,\mathbf{1}) \ll 
 \begin{cases}
 1 & \text{if } \quad  \sigma> r,\\[4pt]
 t^{(r-\sigma)/2}\log t & \text{if } \quad  r-1< \sigma \le r,\\[4pt]
 t^{\,r-\sigma-1/2}\log t & \text{if } \quad  \sigma\le r-1,
 \end{cases}
 \]
 as $t \to \infty$. The implied constants depend on $a$ and $r$.
\end{proposition}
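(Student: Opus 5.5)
The plan is to substitute the decomposition \eqref{HMZ-HZ} into the estimate \eqref{Hurwitz_estimate} term by term and record which term dominates. For fixed $a>0$ and $r$ the coefficients $p_{r,j}(a)$ are constants, so the triangle inequality gives
\[
|\zeta_r(\sigma+it,a,\mathbf 1)| \le \sum_{j=0}^{r-1}|p_{r,j}(a)|\,|\zeta_H(\sigma-j+it,a)|.
\]
It therefore suffices to bound each $\zeta_H(\sigma-j+it,a)$ for $0\le j\le r-1$ and take the worst exponent. The bound \eqref{Hurwitz_estimate} is nondecreasing as the real part decreases, so the worst term is always $j=r-1$, with real part $\sigma-r+1$.

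For the individual regions:
\begin{enumerate}
\item[(a)] If $\sigma>r$, then $\sigma-j>1$ for every $j\le r-1$. Each $\zeta_H(\sigma-j+it,a)$ is bounded by its absolutely convergent series, so $\zeta_r\ll1$.
\item[(b)] If $r-1<\sigma\le r$, then $0<\sigma-r+1\le1$, so the $j=r-1$ term is $\ll t^{(1-(\sigma-r+1))/2}\log t=t^{(r-\sigma)/2}\log t$. For $j\le r-2$ we have $\sigma-j>1$, so those terms are $O(1)$.
\item[(c)] If $\sigma\le r-1$, the terms with $\sigma-j\le0$ satisfy $\zeta_H\ll t^{1/2-\sigma+j}\log t$, maximized at $j=r-1$, which gives $t^{r-\sigma-1/2}\log t$. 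The remaining terms with $0\le\sigma-j\le1$ are $\ll t^{1/2}\log t$ or smaller, and those with $\sigma-j>1$ are $O(1)$. Since $r-\sigma-\tfrac12\ge\tfrac12$ here, all of these are absorbed.
\end{enumerate}

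Two edge cases need care. The first is the boundary $\sigma-r+1=0$, where both the middle and the third branches of \eqref{Hurwitz_estimate} give $t^{1/2}\log t$. They agree, so there is no conflict. The second is uniformity of the implied constants: in \eqref{Hurwitz_estimate} the $\sigma>1$ case is only uniform for $\sigma\ge1+\delta$. Near $\sigma=r$, a term with $\sigma-j$ slightly above $1$ would be bounded by $\zeta(\sigma-j)$ and blow up as $\sigma-j\to1^+$.

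This is the only genuine obstacle. The dangerous index is $j=r-1$ at $\sigma$ slightly above $r$, together with the handover at $\sigma=r$, where the bound should be $\log t$. I would handle it by using the approximate formula \eqref{Hurwitz_finite} with $x=t$ in the range $1\le\Re\le2$. Its main sum is $\ll\sum_{m\le t}(m+a)^{-1}\ll\log t$, and the terms $x^{1-s}/(s-1)$ and $O(x^{-\sigma})$ are $O(1)$ and $O(t^{-1})$ for $t\ge2$. This gives $\zeta_H(u+it,a)\ll\log t$ uniformly for $u\in[1,2]$. That fills the transition, and it matches the $\log t$ in the stated bound at $\sigma=r$, where $t^{(r-\sigma)/2}\log t=\log t$.

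Away from a neighbourhood of $\sigma=r$, the $j\le r-2$ terms have real parts at least $\sigma-r+2>1$, bounded away from $1$ on compact $\sigma$-ranges, so they are uniformly $O(1)$. Hence the constants depend only on $a$ and $r$, for $\sigma$ in fixed compact ranges within each region, which is the sense in which I read ``uniform'' here.
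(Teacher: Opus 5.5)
Your proposal is correct and takes the paper's route: the paper obtains the proposition in one line by substituting \eqref{HMZ-HZ} into \eqref{Hurwitz_estimate}, with the top shift $j=r-1$ dominating, exactly as you do. Your extra step, using \eqref{Hurwitz_finite} with $x=t$ to get $\zeta_H(u+it,a)\ll\log t$ uniformly for $1\le u\le 2$, supplies a detail the paper leaves implicit. You are also right that the bound $\ll 1$ for $\sigma>r$ is uniform only on ranges $\sigma\ge r+\delta$.
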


Let $0<a \le 1$ and $ \lambda \in \R $. The function
\[
  \zeta_L(s,a,\lambda) = \sum_{m=0}^\infty \frac{e^{2\pi i m \lambda }}{(m+a)^s} \quad (\sigma>1)
\]
is called the Lerch zeta function. 
% More generally, we use the notation
% \[
% \zeta_L(s,\alpha,\beta):=\sum_{m=0}^{\infty}\frac{e^{2\pi i m\alpha}}{(m+\beta)^s}
% \qquad (\sigma>1),
% \]
% so that $\zeta_L(s,a,\lambda)$ in the above sense is $\zeta_L(s,\lambda,a)$ in this notation.
When $\lambda=1$, it coincides with the Hurwitz zeta function $\zeta_H(s,a)$, which satisfies the functional equation
\begin{align}
 \zeta_H(1-s,a)
 &= 
 \frac{\Gamma(s)}{(2\pi)^{s}}
 \left\{ e^{-\pi is/2}\sum_{m=1}^\infty \frac{e^{2\pi ima}}{m^s}
 +e^{\pi is/2}  \sum_{m=1}^\infty \frac{e^{-2\pi ima}}{m^s} \right\}    \nonumber\\
 &=
 \Gamma(s) (2\pi)^{-s}
 \left\{ e^{-\pi is/2} \zeta_L(s,1,a)+ e^{\pi is/2} \zeta_L(s,1,1-a) \right\} \label{fq}
\end{align}
%for $ 0<a<1$, where $\zeta_L(s,1,a)=\sum_{m=1}{e^{2\pi ima}}/{m^s}$.
The functional equation \eqref{fq} is a meromorphic continuation for the whole $\mathbb{C}$.
Moreover, the Lerch zeta function $\zeta_L(s,a,\lambda)$ satisfies the functional equation
\begin{align}
\zeta_L(1-s,a,\lambda) 
&= \Gamma(s)(2\pi)^{-s} 
 \big\{  
  e^{\pi is/2 - 2\pi ia \lambda }\zeta_L(s,\lambda,1-a) \nonumber\\
& \qquad \qquad \qquad \qquad + e^{-\pi is/2 + 2\pi ia (1-\lambda) }\zeta_L(s,1-\lambda,a)
 \big\}
 \label{fq_Lerch}
\end{align}
for $0<\lambda<1$ and $0<a<1$.
\begin{proposition}[Theorem 2.1 and 2.2 for Chapter 4 in \cite{LaurincikasGarunkstis2002}]\label{zeta_L_H_1/2_1}
Let $ a>0,\  0<\lambda\le1 $. Then for $ s= \sigma + it \in \C $ with $1/2 < \sigma <1 $, we have
\begin{align}
  \int_1^T |\zeta_L(\sigma+it,a,\lambda)|^2 dt & = \zeta_H(2\sigma,a)T+\frac{(2\pi)^{2\sigma-1}}{2-2\sigma} \zeta_H(2-2\sigma,\lambda)T^{2-2\sigma}  \nonumber\\
  & \qquad  +O(T^{1-\sigma} \log{T}) + O(T^{\sigma/2}),  \label{G_and_L}
\end{align}
and
\begin{align*}
 \int_1^T \left|\zeta_L\left(\frac{1}{2}+it,a,\lambda\right) \right|^2 \,dt 
  = T\log{T} + \left(\gamma(a) + \gamma(\lambda) -1-\log{2\pi} \right)T + O(T^{1/2} \log{T})
\end{align*}
as $T \rightarrow \infty$.
\end{proposition}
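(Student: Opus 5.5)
The statement to prove is the last one displayed, Proposition~\ref{zeta_L_H_1/2_1}. It is quoted from \cite{LaurincikasGarunkstis2002}, so in the paper it only needs to be cited. If I were to prove it, I would follow the classical Hardy--Littlewood/Ingham scheme for $\int_1^T|\zeta(\sigma+it)|^2dt$, adapted to the Lerch function.

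\emph{Approximation step.} Replace $\zeta_L(s,a,\lambda)$ on $1\le t\le T$ by an approximate functional equation. From \eqref{fq_Lerch} one gets
\[
\zeta_L(s,a,\lambda)=\sum_{0\le m\le x}\frac{e^{2\pi i m\lambda}}{(m+a)^s}+\chi(s)\sum_{0\le n\le y}\frac{e^{-2\pi i a(n+\lambda)}}{(n+\lambda)^{1-s}}+\text{error},
\]
with $xy=t/2\pi$ and $\chi(s)\asymp (t/2\pi)^{1/2-\sigma}$, so $|\chi|^2\asymp(t/2\pi)^{1-2\sigma}$. The degenerate case $\lambda=1$ (Hurwitz) is handled with \eqref{fq}. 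It is simpler to take the symmetric choice $x=y=\sqrt{t/2\pi}$. An alternative is Atkinson's method: split $\zeta_L(s)\zeta_L(\bar s)$-type products in the half-plane of absolute convergence and continue analytically.

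\emph{Mean-square step.} Square out and integrate over $[1,T]$.
\begin{enumerate}
\item[(a)] The diagonal of the first sum gives $\sum_{m\le x}(m+a)^{-2\sigma}$, integrated in $t$. Since $\sigma>1/2$, this equals $\zeta_H(2\sigma,a)T$ plus a tail term of size $T^{1-\sigma}$ after integration.
\item[(b)] The diagonal of the second sum gives $\int |\chi|^2\sum_{n\le y}(n+\lambda)^{2\sigma-2}\,dt$. For $\sigma<1$ this sum diverges, so write it as $\zeta_H(2-2\sigma,\lambda)+O(y^{2\sigma-1})$. Then
\[
\int_1^T (t/2\pi)^{1-2\sigma}\,dt=\frac{(2\pi)^{2\sigma-1}}{2-2\sigma}T^{2-2\sigma}+O(1)
\]
produces the secondary term, and the remainder combines with the tail from (a) to order $T^{1-\sigma}$.
\item[(c)] Off-diagonal terms in each sum are bounded by the Montgomery--Vaughan mean value theorem, or by a direct $\int e^{it\log((m+a)/(n+a))}dt\ll|\log((m+a)/(n+a))|^{-1}$ estimate. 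Both give $O(T^{1-\sigma}\log T)$ once the $t$-dependence of the lengths $x(t)$ is removed by splitting $[1,T]$ dyadically.
\item[(d)] The cross terms between the two sums carry the oscillating factor $\chi(s)$. They are handled by the first-derivative test (or a saddle-point analysis), since the phase of $\chi(s)\,(m+a)^{-it}(n+\lambda)^{-it}$ has no stationary point away from $mn\approx t/2\pi$. This yields the $O(T^{\sigma/2})$ term.
\end{enumerate}

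\emph{The case $\sigma=1/2$.} Here both diagonals are $\sum_{m\le\sqrt{t/2\pi}}(m+a)^{-1}$ and $\sum_{n\le\sqrt{t/2\pi}}(n+\lambda)^{-1}$. By the definition of $\gamma(a)$ they equal $\tfrac12\log(t/2\pi)+\gamma(a)$ and $\tfrac12\log(t/2\pi)+\gamma(\lambda)$, up to $o(1)$. Their sum is $\log t-\log 2\pi+\gamma(a)+\gamma(\lambda)$, and integrating gives
\[
T\log T+(\gamma(a)+\gamma(\lambda)-1-\log 2\pi)T.
\]
The off-diagonal and cross terms give $O(T^{1/2}\log T)$.

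\emph{Main obstacle.} The hardest part is making the error in the approximate functional equation and the cross terms uniform enough to reach $O(T^{1/2}\log T)$ rather than $O(T^{1/2+\varepsilon})$. I would first try Atkinson's dissection. It produces exact main terms plus an explicit remainder given by Voronoi/Bessel-type series, which can then be estimated by standard exponential-integral lemmas.
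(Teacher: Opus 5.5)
The paper does not prove this proposition; it quotes it from Laurin\v{c}ikas--Garunk\v{s}tis. Your outline (symmetric approximate functional equation, diagonals for the main terms, Montgomery--Vaughan off the diagonal) is the method of that source, whose cutoff $k(t)=\lfloor (t/2\pi)^{1/2}-a\rfloor$ appears in the remark after the proposition. As written, however, two steps fail.

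\textbf{Diagonal bookkeeping in (a)--(b).} The tail $\sum_{m>x}(m+a)^{-2\sigma}=\frac{x^{1-2\sigma}}{2\sigma-1}+O(x^{-2\sigma})$ is $\asymp t^{1/2-\sigma}$. It therefore integrates to $\asymp T^{3/2-\sigma}$, not $T^{1-\sigma}$. Likewise $|\chi|^2\cdot O(y^{2\sigma-1})$ integrates only to $O(T^{3/2-\sigma})$. Since $3/2-\sigma>2-2\sigma$ for $\sigma>1/2$, either piece alone would swamp the secondary term. The argument survives only because of an exact cancellation, which you must display. Use
\[
\sum_{n\le y}(n+\lambda)^{2\sigma-2}=\zeta_H(2-2\sigma,\lambda)+\frac{y^{2\sigma-1}}{2\sigma-1}+O(y^{2\sigma-2}),
\qquad |\chi|^2=(t/2\pi)^{1-2\sigma}(1+O(1/t)).
\]
Then with $x=y=\sqrt{t/2\pi}$,
\[
\sum_{m\le x}\frac{1}{(m+a)^{2\sigma}}+|\chi|^2\sum_{n\le y}\frac{1}{(n+\lambda)^{2-2\sigma}}
=\zeta_H(2\sigma,a)+\Bigl(\frac{t}{2\pi}\Bigr)^{1-2\sigma}\zeta_H(2-2\sigma,\lambda)+O(t^{-\sigma}),
\]
because the two terms $\mp\frac{1}{2\sigma-1}(t/2\pi)^{1/2-\sigma}$ cancel. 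Similarly, at $\sigma=1/2$ the diagonal asymptotics need error $O(t^{-1/2})$, not $o(1)$, to reach $O(T^{1/2}\log T)$.

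\textbf{The remainder of the approximate functional equation.} This is the more serious gap: you never estimate the contribution of the remainder $E$, and it is the actual bottleneck. At $x=y=\sqrt{t/2\pi}$ the available bound is $E\ll x^{-\sigma}+t^{1/2-\sigma}y^{\sigma-1}\ll t^{-\sigma/2}$. This is not improvable in general, since it is the size of the Riemann--Siegel correction term. The square term $\int_1^T|E|^2\,dt\ll T^{1-\sigma}$ is harmless. The cross term $2\,\mathrm{Re}\int_1^T S\overline{E}\,dt$ is not: here $S$ is the sum of your two Dirichlet polynomials, and $|E|\ll t^{-\sigma/2}$ with Cauchy--Schwarz gives only $O(T^{1-\sigma/2})$. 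Throughout $1/2<\sigma<1$ this exceeds $T^{1-\sigma}\log T+T^{\sigma/2}$, and it exceeds the secondary main term $T^{2-2\sigma}$ once $\sigma>2/3$. At $\sigma=1/2$ it gives $T^{3/4}(\log T)^{1/2}$ instead of $T^{1/2}\log T$.

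Your attribution of $O(T^{\sigma/2})$ to (d) is also off. The remark after the proposition traces that term to Cauchy--Schwarz on an integral of the first Dirichlet polynomial alone against the weight $t^{\sigma/2-1}$. There is no $\chi$-factor there, so it is not one of your terms (d). The terms in (d) are in fact $O(T^{1-\sigma}\log T)$: the stationary point $t=2\pi(m+a)(n+\lambda)$ lies in the range $t\ge 2\pi\max(m,n)^2$ only when $|m-n|\ll 1$, and those pairs contribute about $\sum_{m\le\sqrt T}m^{1-2\sigma}$.

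To close the gap you need one of two things. Either use an approximate functional equation whose leading remainder term is explicit (Riemann--Siegel type), so that its product with $S$ can be integrated with cancellation. Or actually carry out the Atkinson dissection, which you mention only as a fallback.
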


\begin{remark}%[On the term $O(T^{\sigma/2})$ in \cite{LaurincikasGarunkstis2002}]
In \cite[Theorem~1]{GarunkstisLaurincikasSteuding2003b} and \cite[Theorem~2.1, p.68]{LaurincikasGarunkstis2002}, the error term $O(T^{\sigma/2})$ for \eqref{G_and_L} comes from an application of the Cauchy--Schwarz inequality to an
oscillatory integral of the form
\begin{align*}%\label{eq:LG_osc_term}
I(T)
& :=\int_{1}^{T} t^{\sigma/2-1}\sum_{0\le m\le k(t)}
\frac{e^{2\pi i\lambda m}}{(m+a)^{\sigma+it}}\,dt \\
& \ll \left( \int_{1}^{T} t^{\sigma-2} dt \right)^{1/2}
 \left( \int_{1}^{T} \left| \sum_{0\le m\le k(t)}
\frac{e^{2\pi i\lambda m}}{(m+a)^{\sigma+it}} \right|^2 dt \right)^{1/2} \ll T^{\sigma/2}
\end{align*}
where $k(t)=\lfloor (t/2\pi)^{1/2}-a \rfloor$. 
But the fact that the error term $O(T^{\sigma/2})$ can be improved is already pointed out by the authors themselves in the note on \cite[Note, p.69]{LaurincikasGarunkstis2002}.
They state as follows:
“Note that the error term 
$O(T^{\sigma/2})$ in Theorem~2.1 is not desirable. Perhaps it is possible to remove it in view of the results of \cite[Ivić and Matsumoto (1996)]{IvicMatsumono1996}.”
For the application in the present paper, it suffices to note that this contribution
is, in fact, bounded by $O(1)$ as $T\to\infty$. That is
\begin{align}
 \int_1^T |\zeta_L(\sigma+it,a,\lambda)|^2 dt = \zeta_H(2\sigma,a)T +\frac{(2\pi)^{2\sigma-1}}{2-2\sigma} 
 & \zeta_H(2-2\sigma,\lambda)T^{2-2\sigma} \nonumber \\
 & \qquad +O(T^{1-\sigma} \log{T}). \label{G_and_L_2}
\end{align}

\smallskip
\noindent\textbf{Sketch of proof.}
%Let $ k(t)= \lfloor (t/2\pi)^{1/2}-a \rfloor $.
Since $m\le k(t)$ is equivalent to $t\ge 2\pi(m+a)^2$, we may interchange the order of
summation and integration to obtain
\[
I(T)=\sum_{0\le m\le k(T)} \frac{e^{2\pi i\lambda m}}{(m+a)^{\sigma}}
\int_{2\pi(m+a)^2}^{T} t^{\sigma/2-1}e^{-it\log(m+a)}\,dt.
\]
%Put $\ell_m:=\log(m+a)>0$ and $t_0:=2\pi(m+a)^2$.
By integration by parts,
\begin{align*}
&\int_{2\pi(m+a)^2}^{T} t^{\sigma/2-1}e^{-it\log(m+a)}\,dt \\
&=
\Bigl[\frac{t^{\sigma/2-1}}{-i\log(m+a)}e^{-it\log(m+a)}\Bigr]_{2\pi(m+a)^2}^{T}
+\frac{\sigma/2-1}{i\log(m+a)}\int_{2\pi(m+a)^2}^{T} t^{\sigma/2-2}e^{-it\log(m+a)}\,dt,
\end{align*}
hence, using $\sigma/2-2<-1$,
\begin{align*}
&\int_{2\pi(m+a)^2}^{T} t^{\sigma/2-1}e^{-it\log(m+a)}dt \\
& \quad \ll
\frac{T^{\sigma/2-1}}{\log(m+a)}+\frac{(2\pi)^{\sigma/2-1}(m+a)^{\sigma-2}}{\log(m+a)}
+\frac{1}{\log(m+a)}\int_{2\pi(m+a)^2}^{\infty} t^{\sigma/2-2}\,dt \\
& \quad \ll
\frac{T^{\sigma/2-1}}{\log(m+a)}+\frac{(2\pi)^{\sigma/2-1}(m+a)^{\sigma-2}}{\log(m+a)}.
\end{align*}
Therefore
\begin{align*}
 |I(T)|
&\ll
T^{\sigma/2-1}\sum_{0 \le m\le k(T)}\frac{1}{(m+a)^{\sigma}\log(m+a)}
+\sum_{m\le k(T)}\frac{(2\pi)^{\sigma/2-1}(m+a)^{\sigma-2}}{(m+a)^{\sigma}\log(m+a)} \\
&\ll
T^{\sigma/2-1}\sum_{1 \le m\le k(T)}\frac{1}{m^{\sigma}\log m}+\sum_{m\ge 1}\frac{1}{m^{2}\log m}\\
&\ll
T^{\sigma/2-1}\cdot \frac{(k(T))^{1-\sigma}}{\log{k(T)}}.
\end{align*}
Since $k(T)\asymp T^{1/2}$, %and $(2\pi)^{\sigma/2-1}(m+a)^{\sigma-2}\asymp (m+a)^{\sigma-2}$, 
we have
\[
 |I(T)|\ll \frac{1}{T^{1/2}\log{T}} = o(1) \quad (T \rightarrow \infty).
\]
\qed
\end{remark}

\begin{proposition}\label{Hurwitz_0<siga<1/2}%\label{zeta_L_H_1/2}
For $ s= \sigma + it \in \C $ with $0 < \sigma <1/2 $. For $0<\lambda<1 $, we have
\begin{align*}
 \int_1^T|\zeta_L(\sigma+it,a,\lambda)|^2 dt
 =\frac{(2\pi)^{2\sigma-1}}{2-2\sigma}\zeta_H(2-2\sigma,1-\lambda)T^{2-2\sigma}+\zeta_H(2\sigma,a)T+O(T^{1-\sigma}\log{T}).
\end{align*}
Also in the case $\lambda=1$, we have
% \begin{align*}
%   \int_1^T |\zeta_H(s, a)|^2 dt
%   &= \frac{(2\pi)^{2\sigma-1}}{2-2\sigma} \zeta(2-2\sigma)T^{2-2\sigma} + O(T^{3/2 -2\sigma})
% \end{align*}
\begin{align*}
\int_1^T |\zeta_H(\sigma+it,a)|^2 dt 
    =\frac{(2\pi)^{2\sigma-1}}{2-2\sigma} \zeta(2-2\sigma) T^{2-2\sigma} +\zeta_H(2\sigma,a)T+O(T^{1-\sigma}\log{T})
\end{align*}
where $0<a<1$.
%Moreover, in both of the above formulas, the term $\zeta_H(2\sigma,a)T$ is absorbed into $O(T^{3/2-5\sigma/2})$ when $0<\sigma\le 1/5$.
\end{proposition}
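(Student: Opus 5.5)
The plan is to prove Proposition~\ref{Hurwitz_0<siga<1/2} by reducing it to the range $1/2<\sigma<1$ of Proposition~\ref{zeta_L_H_1/2_1}, in the corrected form \eqref{G_and_L_2}, through the functional equation. For $0<\sigma<1/2$ put $s=1-\sigma-it$, so that $1/2<\Re s<1$. By \eqref{fq_Lerch},
\[
\zeta_L(\sigma+it,a,\lambda)=\Gamma(1-\sigma-it)(2\pi)^{-(1-\sigma-it)}\big\{e^{\pi i s/2-2\pi i a\lambda}\zeta_L(s,\lambda,1-a)+e^{-\pi i s/2+2\pi i a(1-\lambda)}\zeta_L(s,1-\lambda,a)\big\}.
\]
By Stirling's formula, $|\Gamma(1-\sigma-it)|=\sqrt{2\pi}\,t^{1/2-\sigma}e^{-\pi t/2}(1+O(1/t))$. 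Also $|e^{\pi i s/2}|=e^{\pi t/2}$, while $|e^{-\pi i s/2}|=e^{-\pi t/2}$. Hence for $t\ge1$ the second term is $O(t^{1/2-\sigma}e^{-\pi t})$, which is negligible. This gives
\[
|\zeta_L(\sigma+it,a,\lambda)|^2=(2\pi)^{2\sigma-1}t^{1-2\sigma}\,|\zeta_L(1-\sigma-it,\lambda,1-a)|^2\,(1+O(t^{-1}))+O(e^{-ct}).
\]
The reflected function is again a Lerch zeta function, with shifted parameter $\lambda$ and twist $1-a$. Conjugation handles $-t$ (complex conjugation turns the twist $1-a$ into $a$). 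For the Hurwitz case $\lambda=1$, use \eqref{fq} in the same way; there the dominant series is $\sum_m e^{\pm 2\pi i m a}m^{-s}$.

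The next step is partial summation. Set $F(u)=\int_1^u|\zeta_L(1-\sigma+iv,\cdot,\cdot)|^2dv$. Since $1/2<1-\sigma<1$, \eqref{G_and_L_2} gives
\[
F(u)=\zeta_H(2-2\sigma,\lambda)u+\frac{(2\pi)^{1-2\sigma}}{2\sigma}\zeta_H(2\sigma,\cdot)u^{2\sigma}+O(u^{\sigma}\log u).
\]
Then
\[
\int_1^T t^{1-2\sigma}dF(t)=T^{1-2\sigma}F(T)-(1-2\sigma)\int_1^T t^{-2\sigma}F(t)\,dt.
\]
\begin{itemize}
\item The main term $\zeta_H(2-2\sigma,\lambda)u$ yields $\zeta_H(2-2\sigma,\lambda)\,T^{2-2\sigma}\bigl(1-\tfrac{1-2\sigma}{2-2\sigma}\bigr)=\zeta_H(2-2\sigma,\lambda)\,T^{2-2\sigma}/(2-2\sigma)$.
\item Multiplying by $(2\pi)^{2\sigma-1}$ gives the leading term of the proposition. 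This requires matching the Hurwitz parameter. The paper writes $1-\lambda$, so one must carefully choose which of the two reflected Lerch functions is dominant, depending on the sign convention; I would fix this by direct checking.
\item The $u^{2\sigma}$ term contributes $(2\pi)^{2\sigma-1}(2\pi)^{1-2\sigma}\zeta_H(2\sigma,\cdot)\,T\cdot\tfrac{1}{2\sigma}\bigl(1-\tfrac{1-2\sigma}{1}\bigr)=\zeta_H(2\sigma,\cdot)\,T$. The Hurwitz parameter must come out as $a$. This is again a conjugation and symmetry check on which of $a$, $1-a$ appears. Note also that $\zeta_H(2\sigma,\cdot)$ is here the continued value, since $2\sigma<1$.
\item The error term $u^{\sigma}\log u$ yields $T^{1-\sigma}\log T$.
\item The factor $(1+O(t^{-1}))$ contributes $\int t^{-2\sigma}dF\ll T^{1-2\sigma}$, which is absorbed.
\end{itemize}

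For $\lambda=1$ the same computation applies. The dual series in \eqref{fq} is $\zeta_L(s,1,a)$, whose mean square over $[1,T]$ at real part $1-\sigma$ has diagonal $\sum_{m\ge1}m^{-2(1-\sigma)}=\zeta(2-2\sigma)$. The secondary term must come out as $\zeta_H(2\sigma,a)T$. I would obtain this by applying \eqref{G_and_L_2} to the dual Lerch function $\sum_{m\ge 0}e^{2\pi i(m+1)a}(m+1)^{-s}$, i.e.\ in the form with twist $a$ and shift $1$. Its secondary coefficient is then the Hurwitz zeta at $2\sigma$ with parameter $a$, up to a unimodular factor that disappears in absolute value.

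The main obstacle is bookkeeping rather than analysis. One must verify that \eqref{G_and_L_2} holds uniformly with the roles of $a$ and $\lambda$ exchanged, including $\lambda\to$ shift $1$ in the Hurwitz case, which requires the case $\lambda=1$ of Proposition~\ref{zeta_L_H_1/2_1}. One must also get the Hurwitz parameters ($\lambda$ versus $1-\lambda$, $a$ versus $1-a$) to match the statement exactly. I would settle this by writing both reflected series explicitly as Lerch functions before applying \eqref{G_and_L_2}.
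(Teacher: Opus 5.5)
Your route is the paper's route. You reflect through \eqref{fq_Lerch}, discard the exponentially small term via Stirling, and integrate the weight $t^{1-2\sigma}$ by parts against \eqref{G_and_L_2} at abscissa $1-\sigma$. The only difference is that the paper applies the functional equation at $\sigma+it$ with $1/2<\sigma<1$ and substitutes $\sigma\to 1-\sigma$ at the end, while you reflect directly at $1-\sigma-it$. Your analysis is correct throughout: the $(1+O(1/t))$ factor, the error term, the Hurwitz case $\lambda=1$, and the secondary term. Your conjugation step already identifies the dual function as $\zeta_L(1-\sigma+it,\lambda,a)$. So \eqref{G_and_L_2} with shift $\lambda$ and twist $a$ gives $\zeta_H(2\sigma,a)T$ with no further checking.

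The step you postpone, however, cannot be completed in the direction you hope, because there is no sign convention to choose. For $t\in[1,T]$ and $s=1-\sigma-it$ we have $|e^{\pi i s/2}|=e^{\pi t/2}$, which forces the dominant dual series to be $\zeta_L(s,\lambda,1-a)$, whose shift is $\lambda$. The diagonal of a mean square sees only the shift: conjugation moves the twist, never the shift. Hence your argument proves
\[
\int_1^T|\zeta_L(\sigma+it,a,\lambda)|^2dt=\frac{(2\pi)^{2\sigma-1}}{2-2\sigma}\zeta_H(2-2\sigma,\lambda)T^{2-2\sigma}+\zeta_H(2\sigma,a)T+O(T^{1-\sigma}\log T).
\]
Since $2-2\sigma>1$, the map $x\mapsto\zeta_H(2-2\sigma,x)$ is strictly decreasing. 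So this leading coefficient differs from the stated $\zeta_H(2-2\sigma,1-\lambda)$ whenever $\lambda\neq 1/2$.

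The discrepancy is a slip in the paper, not in your bookkeeping. Its computation controls $|\zeta_L(1-s,a,\lambda)|^2$ with $1-s=1-\sigma-it$, but the result is recorded as $\int_1^T|\zeta_L(1-\sigma+it,a,\lambda)|^2dt$. Because $\overline{\zeta_L(\bar s,a,\lambda)}=\zeta_L(s,a,1-\lambda)$, this silently swaps $\lambda$ and $1-\lambda$.

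Your version is the consistent one. It has the same shape as \eqref{G_and_L} on the other side of $\sigma=1/2$. At $\lambda=1$ it reduces to $\zeta_H(2-2\sigma,1)=\zeta(2-2\sigma)$, matching the Hurwitz case, whereas $\zeta_H(\cdot,1-\lambda)$ would be $\zeta_H(\cdot,0)$ there and make no sense. So rather than trying to force $1-\lambda$ by further checking, you should state and prove the corrected formula with $\zeta_H(2-2\sigma,\lambda)$. The Hurwitz case, which is the only one used later in the paper, is unaffected.
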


\begin{proof}
We also recall Stirling’s formula for the gamma function:
\begin{align*}
    \Gamma(\sigma+it)
    =\sqrt{2\pi}\, t^{\sigma-1/2}e^{-\pi t/2}\bigl(1+O(1/t)\bigr)
    \qquad (t\to\infty).
\end{align*}
Suppose that $0 < \sigma_0 \le \sigma <1$. Taking the absolute square of both sides of \eqref{fq} and applying Stirling’s formula, we obtain
\begin{align}
 |\zeta_H(1-s,a)|^2
 &= (2\pi)^{1-2\sigma} t^{2\sigma-1} |\zeta_L(s,1,a)|^2
    + e^{-2\pi t} |\zeta_L(s,1,1-a)|^2 \nonumber\\
 &\quad
    + 2(2\pi)^{1-2\sigma} t^{2\sigma-1} e^{-\pi t}
      \mathrm{Re}\!\left(e^{-\pi i s}
      \zeta_L(s,1,a)\overline{\zeta_L(s,1,1-a)}\right) \nonumber\\
 &= (2\pi)^{1-2\sigma} t^{2\sigma-1}\bigl(1+O(1/t)\bigr)
    \times \Bigl\{
      |\zeta_L(s,1,a)|^2
      + e^{-2\pi t}|\zeta_L(s,1,1-a)|^2 \nonumber\\
 &\qquad\qquad\qquad
      + 2e^{-\pi t}\,
        \mathrm{Re}\!\left(
        e^{\pi i\sigma-2\pi i a}
        \zeta_L(s,1,1-a)\overline{\zeta_L(s,1,a)}
        \right)
    \Bigr\}
    \label{hz_fq}
\end{align}
as $t\to\infty$. A completely analogous computation applied to \eqref{fq_Lerch} yields
\begin{align}
 |\zeta_L(1-s,a,\lambda)|^2
 &= (2\pi)^{1-2\sigma} t^{2\sigma-1}\bigl(1+O(1/t)\bigr) \nonumber \\
 & \qquad
    \times \Bigl\{
      |\zeta_L(s,1-\lambda,a)|^2
      + e^{-2\pi t}|\zeta_L(s,\lambda,1-a)|^2 \nonumber\\
 &\qquad\qquad\qquad
      + 2e^{-\pi t}\,
        \mathrm{Re}\!\left(
        e^{\pi i\sigma-2\pi i a}
        \zeta_L(s,\lambda,1-a)\overline{\zeta_L(s,1-\lambda,a)}
        \right)
    \Bigr\},
 \label{sq_fq}
\end{align}
as $t\to\infty$. Suppose that $1/2<\sigma<1$.
Integrating \eqref{sq_fq} over $t\in[1,T]$, we obtain
\begin{align*}
 &\int_1^T|\zeta_L(1-s,a,\lambda)|^2 dt \\
 &= (2\pi)^{1-2\sigma} \bigl(1+O(1/t)\bigr)
  \Bigg\{ \int_1^T t^{2\sigma-1}|\zeta_L(\sigma+it,1-\lambda,a)|^2 dt + O(t^{2\sigma-1}e^{-2\pi T}) \\
 & \qquad \quad +\int_1^T2 t^{2\sigma-1}e^{-\pi t} \cdot \mathrm{Re}(e^{\pi i\sigma-2\pi ia}\zeta_L(s,\lambda,1-a) \overline{\zeta_L(s,1-\lambda,a)}) dt \Bigg\}
\end{align*}
We consider the first integral term in the above.
Let $ Z(u) = \int_1^u | \zeta_L(\sigma+it, a, \lambda) |^2 dt $, so
$ Z'(u) = |\zeta_L(\sigma+iu,a,\lambda)|^2$. Then, by applying integration by parts, the term $|\zeta_L(s,a)| t^{2\sigma-1}$, can be evaluated as follows:
\begin{align}
 & \int_{1}^{T} t^{2\sigma-1} |\zeta_L(\sigma+it,a,\lambda)|^2 dt \nonumber\\
 &= \bigg[ Z(t) t^{2\sigma-1}  \bigg]_1^T
    - (2 \sigma-1)\int_1^T Z(t) t^{2\sigma-2} dt  \nonumber\\
 %&= Z(T)T^{2\sigma-1}- (2 \sigma-1)\int_{1}^{T} Z(t) t^{2\sigma-2} dt \nonumber\\
 &= T^{2\sigma-1} \int_{1}^{T} |\zeta_L(\sigma+it,a,\lambda)|^2 dt 
    - (2 \sigma-1)\int_{1}^{T} Z(t) t^{2\sigma-2} dt\nonumber\\
 %&= \zeta_H(2\sigma,a)T^{2 \sigma}+\frac{(2\pi)^{2\sigma-1}}{2-2\sigma}\zeta(2-2\sigma)T +O(T^{\sigma} \log{T}) + O(T^{5\sigma/2-1}) \nonumber\\
 %&  \quad - (2 \sigma-1)\int_1^T \left\{\zeta_H(2\sigma,a)t+\frac{(2\pi)^{2\sigma-1}}{2-2\sigma}\zeta(2-2\sigma)t^{2-2\sigma}  
 % +O(t^{1-\sigma} \log{T}) + O(t^{\sigma/2}) \right\} t^{2\sigma-2} dt \nonumber\\
 &=\zeta_H(2\sigma,a)T^{2 \sigma}+\frac{(2\pi)^{2\sigma-1}}{2-2\sigma}\zeta_H(2-2\sigma,\lambda)T +O(T^{\sigma} \log{T}) \nonumber\\
 &  \quad - (2 \sigma-1)\int_1^T \left\{\zeta_H(2\sigma,a)t^{2\sigma-1}+\frac{(2\pi)^{2\sigma-1}}{2-2\sigma}\zeta_H(2-2\sigma,\lambda)  
  +O(t^{\sigma-1} \log{t}) \right\} dt
  \nonumber\\
 &= \frac{1}{2\sigma}\zeta_H(2\sigma,a)T^{2 \sigma}+(2\pi)^{2\sigma-1}\zeta_H(2-2\sigma,\lambda)T+O(T^{\sigma}\log{T})
 \nonumber
 %\label{zeta_L_t2sigma-1}
\end{align}
Thus
\begin{align}
 & \int_{1}^{T} t^{2\sigma-1} |\zeta_L(\sigma+it,1-\lambda,a)|^2 dt \nonumber\\
 &= \frac{1}{2\sigma}\zeta_H(2\sigma,1-\lambda)T^{2 \sigma}+(2\pi)^{2\sigma-1} \zeta_H(2-2\sigma,a)T+O(T^{\sigma}\log{T}).
 %\label{zeta_L_t2sigma-1}.
\end{align}
Then we have
\begin{align*}
 &\int_1^T|\zeta_L(1-\sigma+it,a,\lambda)|^2 dt \\
 &= (2\pi)^{1-2\sigma} \int_1^T t^{2\sigma-1} |\zeta_L(1-\sigma+it,1-\lambda,a)|^2 dt \\
 & \qquad \qquad + 2 (2\pi)^{1-2\sigma} \int_1^T t^{2\sigma-1}e^{-\pi t} \mathrm{Re} (e^{\pi i \sigma-2\pi ia}\zeta_L(s,\lambda,1-a) \overline{\zeta_L(s,1-\lambda,a)}) dt \\
 & \qquad \qquad + O(T^{2\sigma}e^{-2\pi T}) + O(T^{2\sigma-1}) \\
 &= (2\pi)^{1-2\sigma} 
  \left\{\frac{1}{2\sigma}\zeta_H(2\sigma,1-\lambda)T^{2 \sigma}+(2\pi)^{2\sigma-1}\zeta_H(2-2\sigma,a)T+O(T^{\sigma}\log{T}) \right\} \\
 & \qquad \qquad + 2 (2\pi)^{1-2\sigma} \int_1^T t^{2\sigma-1} e^{-\pi t} \mathrm{Re}(e^{\pi i \sigma-2\pi ia}\zeta_L(s,\lambda,1-a) \overline{\zeta_L(s,1-\lambda,a)}) dt \\
 & \qquad \qquad + O(T^{2\sigma}e^{-2\pi T}) + O(T^{2\sigma-1})\\
 &=\frac{(2\pi)^{1-2\sigma}}{2\sigma}\zeta_H(2\sigma,1-\lambda)T^{2\sigma}+\zeta_H(2-2\sigma,a)T+O(T^{\sigma}\log{T}).
\end{align*}
The cross term is bounded by
\begin{align*}
 &\int_1^T t^{2\sigma-1}e^{-\pi t}\,
\bigl|\zeta_L(s,\lambda,1-a)\bigr|\,
\bigl|\zeta_L(s,1-\lambda,a)\bigr|\,dt \\
& \qquad \qquad  \le
\frac12\int_1^T t^{2\sigma-1}e^{-\pi t}
\Bigl(|\zeta_L(s,\lambda,1-a)|^2+|\zeta_L(s,1-\lambda,a)|^2\Bigr)\,dt.
\end{align*}
Using a standard polynomial growth bound for the Lerch zeta function,
$|\zeta_L(\sigma+it,\alpha,\beta)|\ll (1+|t|)^A$ for some $A>0$,
the last integral is dominated by
$\int_1^\infty t^{2\sigma-1+2A}e^{-\pi t}\,dt<\infty$.
Hence this contribution is $O(1)$ as $T\to\infty$.

Finally,  replacing to $s$ by $ 1-s $, we have
\begin{align*}
 &\int_1^T|\zeta_L(\sigma+it,a,\lambda)|^2 dt \\
 &\qquad =\frac{(2\pi)^{2\sigma-1}}{2-2\sigma}\zeta_H(2-2\sigma,1-\lambda)T^{2-2\sigma}+\zeta_H(2\sigma,a)T+O(T^{1-\sigma}\log{T}).
\end{align*}
The case $\lambda=1$ can also be treated similarly by means of \eqref{hz_fq}, noting that the term $\zeta_H(2-2\sigma, 1-\lambda)$ is replaced by $\zeta(2-2\sigma)$.
% However, in the right-hand side above, the term $ \zeta_H(2\sigma,a)T $ is absorbed into $O(T^{3/2 - 5\sigma/2})$ when $0< \sigma \le 1/5$.
\end{proof}
\bigskip

\section{Auxiliary lemmas}
%Hence, their contributions to the integral are bounded, and the remaining integrals are $ O(1) $.
We also use the following lemmas.

\subsection{Mixed mean values for Hurwitz zeta functions}
\begin{lemma}[Montgomery--Vaughan theorem]\label{MonVau}
 Let $a_m \in \C$. There exists an absolute constant $c$ such that
 \begin{align}\label{MontVau_ineq}
     \left| \sum_{\substack{1\le m,n\le N \\ m\ne n}} \frac{a_m \bar{a}_n}{(m+a)^{\sigma}(n+a)^{\sigma} \log((m+a)/(n+a))} \right|
     \le c \sum_{m=1}^{N}\frac{m|a_m|^2}{(m+a)^{2\sigma}}.
 \end{align}
\end{lemma}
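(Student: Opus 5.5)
The plan is to reduce the Montgomery--Vaughan inequality for the shifted frequencies $\log(m+a)$ to the general Hilbert-type inequality of Montgomery and Vaughan. That inequality states that for distinct real $\lambda_m$ with separations $\delta_m=\min_{n\ne m}|\lambda_m-\lambda_n|>0$ and any complex $x_m$,
\[
\Bigl|\sum_{m\ne n}\frac{x_m\bar x_n}{\lambda_m-\lambda_n}\Bigr|\le \frac{3\pi}{2}\sum_m \frac{|x_m|^2}{\delta_m}.
\]
We may quote it, or prove it by the usual route: reduce to a bilinear form bounded via a positive-definiteness argument, as in Montgomery--Vaughan's ``Hilbert's inequality'' paper.

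First I set
\[
\lambda_m=\log(m+a),\qquad x_m=\frac{a_m}{(m+a)^{\sigma}} \qquad (1\le m\le N).
\]
Then the left side of \eqref{MontVau_ineq} is exactly $\bigl|\sum_{m\ne n}x_m\bar x_n/(\lambda_m-\lambda_n)\bigr|$. The $\lambda_m$ are strictly increasing, so they are distinct.

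Next I estimate the spacing. By the mean value theorem,
\[
\lambda_{m+1}-\lambda_m=\log\Bigl(1+\frac{1}{m+a}\Bigr)\ge \frac{1}{m+a+1}.
\]
Likewise $\lambda_m-\lambda_{m-1}\ge 1/(m+a)$. Hence $\delta_m\ge 1/(m+a+1)$, and so $1/\delta_m\le m+a+1\le (2+a)m$ for $m\ge1$.

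Inserting this into the general inequality gives
\[
\text{LHS}\le \frac{3\pi}{2}(2+a)\sum_{m=1}^N \frac{m|a_m|^2}{(m+a)^{2\sigma}}.
\]
This is \eqref{MontVau_ineq} with $c=\tfrac{3\pi}{2}(2+a)$. For fixed $a$ this constant is absolute in $N$, $\sigma$ and $(a_m)$; if genuine $a$-independence is wanted, one instead restricts to $0<a\le1$, which gives $c=9\pi/2$.

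The only real obstacle is the general Hilbert inequality with nonuniform spacing. If it cannot be cited, I would prove the weaker form with constant $\pi/\delta$ for uniform spacing, and handle the variable gaps by a dyadic decomposition in $m$. Within the block $m\sim M$ the gaps are $\asymp 1/M$. Pairs in different dyadic blocks satisfy $|\lambda_m-\lambda_n|\gg 1$ in a way that sums like a Schur test, since $\sum_{k}2^{-|k|}$-type decay holds across blocks. This recovers the bound up to a worse absolute constant, which suffices for the applications.
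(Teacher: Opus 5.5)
Your reduction is correct and matches the paper, which gives no proof and simply cites the lemma as a special case of the Montgomery--Vaughan inequality. Your choice $\lambda_m=\log(m+a)$, $x_m=a_m(m+a)^{-\sigma}$ with $\delta_m^{-1}\le m+a+1\le(2+a)m$ is exactly that specialization. Your caveat that $c$ must depend on $a$ unless $a$ is bounded (as in Lemma~\ref{lem:hz_mixed_mean}, where $0<a\le1$) is genuinely needed: for $N=2$, $x_1=1$, $x_2=i$, the left side is $2/\log\frac{2+a}{1+a}>2(1+a)$ while the right side is $3c$.

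Do not rely on the dyadic fallback, though. Pairs straddling adjacent blocks are not separated. For blocks $k$ apart one only has $|\lambda_m-\lambda_n|\asymp|k|$, not $2^{|k|}$, so a Schur test with absolute values loses a factor $\asymp\log\log N$ (take $x_m=1/m$). You would have to exploit cancellation instead, e.g.\ by applying Hilbert's inequality again to the block sums.
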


This inequality is a special case of the Montgomery--Vaughan theorem
\cite{MontgomeryVaughan1974}. A proof can be found in
Theorem 1.4.2 of \cite{Ramachandra1995}.
\medskip

\begin{lemma}\label{lem:hz_mixed_mean}
Let $s=\sigma+it$ with $r-1<\sigma<r$ and $0<a\le 1$.
Let $k$ and $l$ be non-negative integers such that $0\le k,l\le r-1$ and $(k,l)\ne (r-1,r-1)$.
If $k+l=2r-3$, then
\begin{align*}
 &\int_1^T \zeta_H(s-k,a) \overline{\zeta_H(s-l,a)} \, dt \\
 & \qquad = \zeta_H(2\sigma-2r+3,a)T +
 \begin{cases}
  O(1) & \text{if } r-1/2 < \sigma <r, \\[1ex]
  O\left(\log T\right) & \text{if } \sigma = r-1/2, \\[1ex]
  O(T^{2r-2\sigma-1}) & \text{if } r-1<\sigma<r-1/2.
 \end{cases}
\end{align*}
If $0 \le k+l \le 2r-4$, then
\begin{align*}
 \int_1^T \zeta_H(s-k,a) \overline{\zeta_H(s-l,a)} \, dt 
 = \zeta_H(2\sigma-k-l,a)T + O(1).
\end{align*}
The implied constants depend only on $\sigma$, $a$, $k$, and $l$.
\end{lemma}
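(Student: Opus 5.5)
We note first that, since $0\le k,l\le r-1$ and $(k,l)\neq(r-1,r-1)$, at least one of $\Re(s-k)=\sigma-k$ and $\Re(s-l)=\sigma-l$ exceeds $1$. Write $u=\sigma-k$ and $v=\sigma-l$; we have $u+v=2\sigma-k-l$.

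\emph{Case $k+l\le 2r-4$.} Here $k+l\le 2r-4$ and $\sigma>r-1$ give $u+v>2$. If one of $u,v$ is $\le 1$, say $u\in(0,1]$, then $v>2-u\ge 1$ and in fact $v\ge u+1$, since the larger index differs by at least one.

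\emph{Case $k+l=2r-3$.} The pair is $\{r-1,r-2\}$, so one exponent equals $\sigma-r+1\in(0,1)$ and the other equals $\sigma-r+2\in(1,2)$.

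The strategy in both cases is the same. We replace the factor with real part $\le1$ by the approximation \eqref{Hurwitz_finite} with $x=t$, and we expand the factor with real part $>1$ as an absolutely convergent Dirichlet series (or also by \eqref{Hurwitz_finite}, which is harmless). Thus
\[
\zeta_H(s-k,a)=\sum_{m\le t}(m+a)^{-s+k}+\frac{t^{1-s+k}}{s-k-1}+O(t^{-u}),
\]
and similarly for the $l$-factor. Multiplying out gives three kinds of terms.

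\emph{Diagonal terms.} Taking $m=n$, the contribution is
\[
\int_1^T\sum_{m\le t}(m+a)^{-(u+v)}\,dt .
\]
Since $u+v>1$, this equals
\[
\zeta_H(u+v,a)\,T-\int_1^T\sum_{m>t}(m+a)^{-(u+v)}\,dt .
\]
The correction integrand is $\ll t^{1-u-v}$, so the correction is $O(1)$ when $u+v>2$. When $k+l=2r-3$ we have $u+v=2\sigma-2r+3$, and the correction is $\int_1^T t^{2r-2-2\sigma}\,dt$. This is $O(1)$, $O(\log T)$, or $O(T^{2r-2\sigma-1})$ according as $\sigma>r-1/2$, $\sigma=r-1/2$, or $\sigma<r-1/2$, which is exactly the trichotomy in the statement. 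This diagonal computation is also where the main term $\zeta_H(2\sigma-k-l,a)T$ comes from.

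\emph{Off-diagonal terms.} For $m\neq n$ we interchange summation and integration over $t\ge\max(m,n)$ and integrate $((n+a)/(m+a))^{it}$ explicitly. This produces boundary terms of the shape
\[
\sum_{m\ne n}\frac{(m+a)^{-u}(n+a)^{-v}}{\log((m+a)/(n+a))}.
\]
We bound these by Lemma~\ref{MonVau}, or directly, splitting into $|m-n|\ge m/2$ (where the logarithm is $\gg1$) and the near-diagonal range (where $|\log|^{-1}\ll m/|m-n|$). The resulting sums are of the type
\[
\sum_m (m+a)^{1-u-v}\log m,
\]
convergent when $u+v>2$. The upper endpoint $t=T$ contributes $\sum_{m,n\le T}$, which needs more care. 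There the saving comes from the fact that one exponent exceeds $1$ and the other exceeds $0$ with sum exceeding $2$; in the case $k+l=2r-3$ one gets $O(T^{2r-2\sigma-1}\log T)$-type bounds. We would try to sharpen these to match the diagonal error, absorbing any $\log$ by the finer Montgomery--Vaughan weights. This step is the main obstacle: the error must be at most $O(\log T)$ at $\sigma=r-1/2$, and the near-diagonal logarithms must not spoil the exponent.

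\emph{Terms with the factor $t^{1-s+k}/(s-k-1)$.} These have size $\ll t^{-u}$. Cross terms between this factor and a Dirichlet polynomial are oscillatory integrals $\int t^{-u}(n+a)^{-v}e^{-it\log(t/(n+a))}\,dt$. We treat them by integration by parts, or by the first-derivative test as in the sketch following \eqref{G_and_L_2}, and they give $O(1)$ in the range $u+v>2$. In the case $k+l=2r-3$ they give at worst the same power $T^{2r-2\sigma-1}$ or $\log T$. The $O(t^{-u})$ remainders are bounded trivially using the absolute bound for the other factor.

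Collecting the three kinds of terms gives both formulas, with constants depending only on $\sigma,a,k,l$.
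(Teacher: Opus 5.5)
Your plan follows the paper's plan: approximate the factor with real part $\le 1$ by \eqref{Hurwitz_finite} with $x=t$, extract the diagonal, and control the off-diagonal by Lemma~\ref{MonVau}. You are more careful than the paper in two places: the off-diagonal integrals start at $t=\max(m,n)$, and the diagonal tail is one source of the trichotomy. But there is a genuine gap, and it is wider than the step you flag.

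Whenever $k=r-1$ (or symmetrically $l=r-1$), you have $u=\sigma-r+1\in(0,1)$ and $v=\sigma-l>1$. In that situation two estimates you treat as routine each lose a factor $T^{1-u}=T^{r-\sigma}$. This exceeds every error term in the statement, since $r-\sigma>\max(0,2r-2\sigma-1)$ for $\sigma>r-1$.
\begin{enumerate}
\item The remainder $O(t^{-u})$ times the bounded factor $\zeta_H(s-l,a)$ integrates trivially to $\asymp\int_1^T t^{-u}\,dt\asymp T^{1-u}$, not $O(1)$.
\item The off-diagonal boundary sums reduce to $\sum_m(m+a)^{1-u-v}\log m$ only in the range $m\asymp n$. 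In the range $n\le m/2$, with $m$ the index carrying the exponent $u$, one has $|\log\frac{n+a}{m+a}|\asymp\log m$ for bounded $n$. So the absolute sum there is of order $T^{1-u}/\log T$.
\end{enumerate}
The condition $u+v>2$ does not help, because the loss comes from $u<1$ alone. Consequently your argument does not give the $O(1)$ of the second assertion either when $\max(k,l)=r-1$.

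The finer Montgomery--Vaughan weights will not rescue this. Lemma~\ref{MonVau} bounds a form in a single sequence $a_m$, but here the two sides carry different weights, $(m+a)^{-u}$ and $(n+a)^{-v}$. Matching them to $a_m\bar a_n(m+a)^{-\tau}(n+a)^{-\tau}$ forces $a_m=(m+a)^{\tau-u}$ on one side and $\bar a_n=(n+a)^{\tau-v}$ on the other, which is one sequence only if $u=v$.

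You cannot import the missing step from the paper either. Its choice $a_m=(m+a)^{(r-l-1)/2}$ produces $(m+a)^{r-1-\sigma}(n+a)^{r-1-\sigma}$, which equals $(m+a)^{-(\sigma-r+1)}(n+a)^{-(\sigma-l)}$ only if $l=r-1$. Its bound $E_{\mathrm{rem}}=O(1)$ treats $\int_1^T t^{r-1-\sigma}\sum_{n\le t}(n+a)^{-(\sigma-l)}\,dt$ as bounded, although this integral is $\asymp T^{r-\sigma}$.

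The bilinear (polarized) Montgomery--Vaughan inequality gives only $(\sum_{m\le T}m^{1-2u})^{1/2}(\sum_n n^{1-2v})^{1/2}\asymp T^{1-u}$. In fact the stated error terms are doubtful. Take $k+l=2r-3$ and $\sigma>r-1/2$, so $u>1/2$. Shifting $\frac1i\int_{u+i}^{u+iT}\zeta_H(w,a)\zeta_H(2u+1-w,a)\,dw$ to the line $\mathrm{Re}\,w=u+\frac12>1$ gives
\[
\int_1^T\zeta_H(s-r+1,a)\overline{\zeta_H(s-r+2,a)}\,dt=\zeta_H(2u+1,a)\,T+O(1)+i\int_u^{u+1/2}\zeta_H(\alpha+iT,a)\,\zeta_H(2u+1-\alpha-iT,a)\,d\alpha .
\]
The last term is a horizontal integral through the critical strip. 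It is $O(T^{(1-u)/2}\log T)$ by \eqref{Hurwitz_estimate}, but there is no reason for it to be bounded.

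What your method honestly proves is the stated main term with error $O(T^{r-\sigma}\log T)$. That weaker statement is all the proof of Theorem~\ref{zeta_r(1/2,a)} actually needs.
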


\begin{proof}
First, assume that $0 \le k,l \le r-2$.
Since $r-1<\sigma<r$, we have $\sigma-k>1$ and $\sigma-l>1$. Hence
the Dirichlet series for $\zeta_H(s-k,a)$ and $\zeta_H(\bar s-l,a)$ converge absolutely, and we may write
\begin{align*}
 \zeta_H(s-k,a)\overline{\zeta_H(s-l,a)}
 &= \sum_{m=0}^{\infty}\frac{1}{(m+a)^{2\sigma-k-l}}
 + \sum_{\substack{m,n\ge 0\\ m\ne n}}
   \frac{1}{(m+a)^{\sigma-k}(n+a)^{\sigma-l}}
   \Bigl(\frac{n+a}{m+a}\Bigr)^{it}.
\end{align*}
Integrating term-by-term, we obtain
\begin{align*}
 &\int_{1}^{T}\zeta_H(s-k,a)\overline{\zeta_H(s-l,a)}\,dt \\
 &\quad = (T-1)\sum_{m=0}^{\infty}\frac{1}{(m+a)^{2\sigma-k-l}}
 + \sum_{\substack{m,n\ge 0\\ m\ne n}}
   \frac{1}{(m+a)^{\sigma-k}(n+a)^{\sigma-l}}
   \int_{1}^{T}\Bigl(\frac{n+a}{m+a}\Bigr)^{it}\,dt\\
 &\quad = \zeta_H(2\sigma-k-l,a)\,T + O(1),
\end{align*}
because 
$$\int_1^T \left(\frac{n+a}{m+a} \right)^{it}dt
= \frac{e^{iT\log{((n+a)/(m+a))}}-e^{i\log{((n+a)/(m+a))}}}{i\log{((n+a)/(m+a))}}$$
and the off-diagonal double series converges absolutely when $\sigma-k>1$ and $\sigma-l>1$.
This proves the second assertion in the range $0\le k,l\le r-2$.

\medskip
\noindent
Next, assume that $k=r-1$ or $l=r-1$.
By symmetry we may assume $k=r-1$ and $0\le l\le r-2$.
To avoid non-absolutely convergent series, we use the standard approximation \eqref{Hurwitz_finite} with $x=t$:
\begin{align*}
 \zeta_H(\sigma'+it,a)
 = \sum_{n\le t}\frac{1}{(n+a)^{\sigma'+it}}
   + \frac{t^{1-(\sigma'+it)}}{\sigma'+it-1}
   + O(t^{-\sigma'})
\qquad (t\ge 1),
\end{align*}
valid uniformly for $0<\sigma'\le 2$.
Applying this with $\sigma'=\sigma-r+1$ (and also with $\sigma'=\sigma-l$) gives, for $t\ge 1$,
\begin{align}
 \zeta_H(s-r+1,a)
 &= \sum_{n\le t}\frac{1}{(n+a)^{s-r+1}} + O(t^{r-1-\sigma}),
 \label{eq:approx_r-1}\\
 \zeta_H(s-l,a)
 &= \sum_{n\le t}\frac{1}{(n+a)^{s-l}} + O(t^{l-\sigma}).
 \label{eq:approx_l}
\end{align}
Indeed, since $|s-r|\asymp t$ and $|t^{r-s}|=t^{r-\sigma}$, we have
$t^{r-s}/(s-r)=O(t^{r-\sigma-1})$, and similarly for $l$.
Multiplying \eqref{eq:approx_r-1} and the conjugate of \eqref{eq:approx_l}, we obtain
\begin{align*}
 \zeta_H(s-r+1,a)\overline{\zeta_H(s-l,a)}
 &= \sum_{m,n\le t}\frac{1}{(m+a)^{s-r+1}(n+a)^{\bar s-l}}
 + O\!\left(t^{l-\sigma}\sum_{m\le t}\frac{1}{(m+a)^{\sigma-r+1}}\right) \\
 &\quad + O\!\left(t^{r-1-\sigma}\sum_{n\le t}\frac{1}{(n+a)^{\sigma-l}}\right)
 + O(t^{r+l-1-2\sigma}).
\end{align*}
Integrating from $1$ to $T$ and separating the diagonal part $m=n$ yields
\begin{align}
 \int_{1}^{T}\zeta_H(s-r+1,a)\overline{\zeta_H(s-l,a)}\,dt
 &= \zeta_H(2\sigma-r+1-l,a)\,T + E_{\mathrm{off}}(T) + E_{\mathrm{rem}}(T),
 \label{eq:main_decomp}
\end{align}
where $E_{\mathrm{off}}(T)$ is the off-diagonal contribution
\begin{align*}
 E_{\mathrm{off}}(T)
 := \sum_{\substack{m,n\le T\\ m\ne n}}
 \frac{e^{iT\log{((n+a)/(m+a))}}-e^{i\log{((n+a)/(m+a))}}}
 {(m+a)^{\sigma-r+1}(n+a)^{\sigma-l}\log{((n+a)/(m+a))}},
\end{align*}
and $E_{\mathrm{rem}}(T)$ is the contribution coming from the error terms
in \eqref{eq:approx_l}.
\noindent
Let $a_m=(m+a)^{(r-l-1)/2}$, then
\[
\frac{a_m\overline{a_n}}{(m+a)^{\sigma-(r-1+l)/2}(n+a)^{\sigma-(r-1+l)/2}}
=\frac{1}{(m+a)^{\sigma-r+1}(n+a)^{\sigma-l}}.
\]
Hence Lemma~\ref{MonVau} gives
\begin{align*}
 |E_{\mathrm{off}}(T)|
 &\ll \sum_{m\le T}\frac{m}{(m+a)^{2\sigma-(r-1)-l}}
 \ll \sum_{m\le T}\frac{1}{(m+a)^{2\sigma-(r-1)-l-1}}.
\end{align*}
If $l\le r-3$, then $2\sigma-(r-1)-l-1 \ge 2\sigma-2r+3>1$, so $|E_{\mathrm{off}}(T)|=O(1)$.
If $l=r-2$ (equivalently, $k+l=2r-3$), then
$2\sigma-(r-1)-l-1=2\sigma-2r+2$, and therefore
\[
|E_{\mathrm{off}}(T)|
=
\begin{cases}
O(1) & (r-1/2<\sigma<r),\\
O(\log T) & (\sigma=r-1/2),\\
O(T^{2r-2\sigma-1}) & (r-1<\sigma<r-1/2).
\end{cases}
\]
\smallskip
\noindent
Using the elementary bound
\[
\sum_{n\le t}\frac{1}{(n+a)^\alpha}\ll
\begin{cases}
1 & (\alpha>1),\\
\log t & (\alpha=1),\\
t^{1-\alpha} & (0<\alpha<1),
\end{cases}
\]
together with $\sigma-l>0$ and $\sigma-r+1\in(0,1)$, we obtain
\[
E_{\mathrm{rem}}(T)\ll
\int_1^T t^{l-\sigma}\sum_{m\le t}\frac{1}{(m+a)^{\sigma-r+1}}\,dt
+\int_1^T t^{r-1-\sigma}\sum_{n\le t}\frac{1}{(n+a)^{\sigma-l}}\,dt
+\int_1^T t^{r+l-1-2\sigma}\,dt.
\]
If $l\le r-3$, then $\sigma-l>1$ and the second integral is $O(1)$, while the first and third
integrals are also $O(1)$ because $l-\sigma+(r-\sigma)<-1$ and $r+l-1-2\sigma<-1$.
Hence $E_{\mathrm{rem}}(T)=O(1)$.
If $l=r-2$, then $\sigma-l=\sigma-r+2>1$ still holds, so the second integral is $O(1)$, and
the remaining two integrals are
\[
\ll \int_1^T t^{r-2-\sigma}\,t^{r-\sigma}\,dt + \int_1^T t^{2r-3-2\sigma}\,dt
\ll T^{2r-2\sigma-1}.
\]
Therefore, in all cases we have
\[
E_{\mathrm{rem}}(T)
=
\begin{cases}
O(1) & (l\le r-3),\\
O(T^{2r-2\sigma-1}) & (l=r-2).
\end{cases}
\]
Combining \eqref{eq:main_decomp} with the above estimates, we obtain the desired assertions
for $k=r-1$. The case $l=r-1$ is analogous, and the lemma follows.
\end{proof}

\bigskip

\subsection{Truncation formulas and comparison principles}
\begin{lemma}[Theorem 1.2 of \cite{MiyagawaMurahara2025}]\label{MiyaMura_Thm1.2}
Let $ r-1 < \sigma_1 < \sigma_2 $, $x \geq 1$ and $C>1$. If $s= \sigma+it \in \C $ with $ \sigma_1 < \sigma < \sigma_2 $ and $ |t| \leq 2 \pi x / C $, then
\begin{align*} 
 \zeta_r(s,a,\mathbf{w})
 &=\sum_{0\le m_1\le x}
  \cdots 
  \sum_{0\le m_r\le x}
  \frac{1}{(a+\mathbf{m}\cdot\mathbf{w})^{s}}\\
 &\quad -
 \sum_{\substack{E\subseteq\{w_1,\dots,w_r\}\\E\ne\emptyset}}
 (-1)^{\#E}
 \frac{(a+x\sum_{ e\in E }e)^{r-s}}{(s-1)\cdots (s-r) w_1\cdots w_{r}}
 +O(x^{r-1-\sigma})
\end{align*}
as $x\rightarrow \infty$.
\end{lemma}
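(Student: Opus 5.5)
The plan is to split the lattice $\mathbb{Z}_{\ge0}^r$ into the box $[0,x]^r$ and its complement, and to treat the complement by inclusion--exclusion together with a one-dimensional sum-to-integral comparison (Euler--Maclaurin/Kusmin--Landau type) applied coordinatewise. I would first work in the half-plane $\sigma>r$, where everything converges absolutely, and then pass to $r-1<\sigma$ by continuation; the continuation step is the delicate point discussed in the last paragraph.

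\textbf{Step 1: inclusion--exclusion.} For $\sigma>r$, write $\chi_j=\mathbf 1[m_j>x]$. Expanding $\prod_j\bigl(1-\chi_j+\chi_j\bigr)$ and collecting terms gives
\[
\zeta_r(s,a,\mathbf w)=\sum_{\mathbf m\in[0,x]^r}(a+\mathbf m\cdot\mathbf w)^{-s}+\sum_{\emptyset\ne E}(-1)^{\#E+1}S_E(s),
\]
where $S_E(s)$ is the sum over those $\mathbf m$ with $m_j>x$ for $j\in E$ and $m_j\ge0$ for $j\notin E$.

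\textbf{Step 2: comparison with the corresponding integral.} Let $I_E(s)$ be the integral of $(a+\mathbf u\cdot\mathbf w)^{-s}$ over the same region, with $\mathbf u$ real. The substitution $u_j=x+v_j$ for $j\in E$ leaves an integral over the full orthant $\mathbb{R}_{\ge0}^r$ with $a$ replaced by $a+x\sum_{j\in E}w_j$. Integrating successively in each $v_j$ then gives
\[
I_E(s)=\frac{\bigl(a+x\sum_{e\in E}e\bigr)^{r-s}}{(s-1)\cdots(s-r)\,w_1\cdots w_r}.
\]
This is exactly the main term in the statement, with the sign $-(-1)^{\#E}$.

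\textbf{Step 3: bounding $S_E-I_E$.} I would replace the sums by integrals one coordinate at a time, telescoping $S_E-I_E$ into $r$ pieces. In each piece one variable is summed and the others are integrated (or already summed), with all parameters fixed. In each piece one needs the one-dimensional estimate
\[
\sum_{m>M}f(A+mw)=\frac1w\int_M^\infty f(A+uw)\,du+O\bigl(|f(A+Mw)|\bigr),\qquad f(y)=y^{-s}.
\]
This comes from Euler--Maclaurin with a first-derivative remainder, combined with the Kusmin--Landau lemma applied to the phase $\frac{t}{2\pi}\log(A+mw)$. The phase derivative is $\frac{t w}{2\pi(A+mw)}$. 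Since $E\neq\emptyset$, the shift always satisfies $A\ge a+x\min_j w_j$, so the derivative is bounded by $|t|\,w_{\max}/(2\pi x\,w_{\min})$. Hence the hypothesis $|t|\le 2\pi x/C$ keeps the phase derivative away from integers. This may require enlarging $C$ by a factor depending on $\mathbf w$, or arguing with the non-$E$ coordinates summed first; the implied constants are allowed to depend on $\mathbf w$, so either fix should be acceptable. Each boundary error is of size $(x+\cdots)^{-\sigma}$. It is then integrated over the remaining $r-1$ directions, and since $\sigma>r-1$ these integrals converge and give $x^{r-1-\sigma}$. Summing over the finitely many $E$ gives the error $O(x^{r-1-\sigma})$.

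\textbf{Main obstacle: the range $r-1<\sigma\le r$.} Here the tails $S_E$ no longer converge individually. To handle this I would make Step 2 exact rather than asymptotic: write each sum-minus-integral as an Euler--Maclaurin remainder integral involving $\psi(u)=\{u\}-1/2$. Integrating by parts once more (using a bounded primitive of $\psi$), these remainder integrals converge absolutely for $\sigma>r-1$. Both sides of the resulting identity are then analytic in $s$ there, and the identity extends from $\sigma>r$ by analytic continuation. The error bound has to be proved directly for this continued remainder form, with the Kusmin--Landau estimate applied to the $\psi$-integrals uniformly in $|t|\le2\pi x/C$. This uniformity is where I expect most of the care to be needed.
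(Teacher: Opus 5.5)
The paper does not prove this lemma; it imports it from Miyagawa--Murahara, so your argument has to stand on its own. Steps 1 and 2 are correct, including the sign $-(-1)^{\#E}$.

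\textbf{The gap is the one-dimensional estimate in Step 3.} That estimate needs the phase derivative $|t|w_j/(2\pi(a+\mathbf m\cdot\mathbf w))$ in the direction $j$ being summed to stay below some $\theta<1$.
\begin{itemize}
\item For $j\in E$ this holds with the given $C$. Indeed $a+\mathbf m\cdot\mathbf w\ge m_jw_j>xw_j$, so the derivative is at most $|t|/(2\pi x)\le 1/C$. Your cruder bound $|t|w_{\max}/(2\pi x w_{\min})$ hides this.
\item For $j\notin E$ the only lower bound is $a+x\sum_{e\in E}e$. The derivative can then be about $w_j/(C\sum_{e\in E}e)$, which may exceed $1$.
\end{itemize}
Concretely, take $r=2$, $\mathbf w=(1,W)$ with $W>C$, $E=\{1\}$, and $|t|$ close to $2\pi x/C$. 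For each $m_1$ with $x<m_1\lesssim xW/C$, the fiber $\sum_{m_2\ge0}(a+m_1+m_2W)^{-s}$ has a stationary point. Its deviation from the integral is then of size $x^{1/2-\sigma}$, not $x^{-\sigma}$. Summing absolute values over these $\asymp x$ values of $m_1$ gives only $x^{3/2-\sigma}$, which exceeds the target $x^{1-\sigma}$.

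Neither of your suggested fixes works:
\begin{itemize}
\item Enlarging $C$ proves a weaker lemma, valid only for $|t|\le 2\pi x\,w_{\min}/(Cw_{\max})$. The statement is for every $C>1$, and letting the implied constants depend on $\mathbf w$ does not allow you to shrink the $t$-range.
\item Reordering the summations does not change the lower bound for $a+\mathbf m\cdot\mathbf w$ in a non-$E$ direction, so the derivative is no better.
\end{itemize}

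\textbf{What is missing is a mechanism that pays for these resonances.} Order the coordinates $j_1,\dots,j_r$ with those of $E$ first, telescope in this order, and evaluate the $E$-integrals exactly before estimating anything. A piece in which a non-$E$ coordinate $j=j_k$ (so $k>\#E$) is replaced then reads
\[
\frac{1}{(s-1)\cdots(s-k+1)\prod_{i<k}w_{j_i}}\sum_{\mathbf m'}\Bigl(\sum_{m\ge0}g_B(m)-\int_0^\infty g_B(u)\,du\Bigr),\qquad g_B(u)=(B+uw_j)^{k-1-s}.
\]
Here $\mathbf m'$ runs over $m_{j_{k+1}},\dots,m_{j_r}$, and $B=a+x\sum_{e\in E}e+\mathbf m'\cdot\mathbf w'\gg x$.

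Split the $m$-range at the point where the phase derivative equals a fixed $\theta<1$.
\begin{itemize}
\item The resonant part is nonempty only if $|t|\asymp x\asymp B$. It has $O(|t|)$ terms, so it is trivially $\ll |t|B^{k-1-\sigma}$.
\item After multiplying by the prefactor $\ll|t|^{-(k-1)}$, each fiber contributes $\ll x^{1-\sigma}$.
\item Summing over the $O(x^{r-k})$ tuples $\mathbf m'$ with $B\ll x$ gives $O(x^{r-k+1-\sigma})\subseteq O(x^{r-1-\sigma})$, since $k\ge 2$.
\item On the non-resonant part, Kusmin--Landau gives $O(B^{k-1-\sigma})$. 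This sums over $\mathbf m'$ to $O(x^{r-1-\sigma})$ because $\sigma>r-1$.
\end{itemize}

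Your ``all parameters fixed'' bookkeeping would not work even with this ordering. Estimating the non-$E$ fiber pointwise in the $E$-variables and then integrating absolute values loses exactly the factor $1/|s-1|$ that makes the argument go through. With the correct ordering, every piece also converges absolutely for $\sigma>r-1$, so your continuation step becomes routine.
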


\begin{corollary}\label{cor:trunc_t}
Let $r-1<\sigma_1<\sigma_2<r$. Then, uniformly for $\sigma_1<\sigma<\sigma_2$ and $t\ge1$,
\[
\zeta_r(s,a,\mathbf w)
=
\sum_{0\le m_1,\dots,m_r\le t}
\frac{1}{(a+\mathbf m\cdot\mathbf w)^s}
+O(t^{r-1-\sigma}).
\]
\end{corollary}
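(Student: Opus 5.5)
Corollary~\ref{cor:trunc_t} follows by applying Lemma~\ref{MiyaMura_Thm1.2} with a truncation parameter $x$ proportional to $t$, and then absorbing the boundary terms into the error. Fix $C>1$, say $C=2$, and choose the compact range $\sigma_1<\sigma<\sigma_2$ with $r-1<\sigma_1<\sigma_2<r$. For $t\ge1$, set $x=Ct/(2\pi)$, so that $|t|\le 2\pi x/C$ holds with equality and the lemma applies, uniformly in $\sigma$. (For bounded $t$ one could instead take $x=\max(1,Ct/2\pi)$. Since the statement concerns $t\to\infty$ with constants allowed to depend on the strip, small $t$ only affects the implied constant.)

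The first step is to estimate the correction terms. Each term indexed by $E\ne\emptyset$ has modulus
\[
\frac{(a+x\sum_{e\in E}e)^{r-\sigma}}{|s-1|\cdots|s-r|\,w_1\cdots w_r}\ll \frac{x^{r-\sigma}}{t^{r}},
\]
because $|s-j|\ge t$ for each $j$. With $x\asymp t$ this is $\ll t^{-\sigma}\le t^{r-1-\sigma}$ when $r\ge1$. There are only $2^r-1$ such terms, so together they are $O(t^{r-1-\sigma})$. The lemma's own error term is $O(x^{r-1-\sigma})=O(t^{r-1-\sigma})$.

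The second step is to move the cutoff from $m_j\le x=Ct/2\pi$ to $m_j\le t$. This is the only delicate point. I would avoid it by choosing $C=2\pi$, which makes $x=t$ exactly and requires $C>1$, so it is allowed. Then $|t|\le 2\pi x/C=x$ holds, the truncated sum is exactly $\sum_{0\le m_j\le t}$, and no shifting of the cutoff is needed.

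The remaining issue is uniformity. The $O$-constant in Lemma~\ref{MiyaMura_Thm1.2} is uniform for $\sigma\in(\sigma_1,\sigma_2)$ and $|t|\le 2\pi x/C$ with $C$ fixed. Hence the combined error $O(t^{r-1-\sigma})+O(t^{-\sigma})$ is uniform, and the corollary follows. The main obstacle, if any, is checking that the lemma is valid at the endpoint $x=t$, including small $x\ge1$. This holds because $x\ge1$ is allowed, and for $1\le t\le t_0$ both sides are bounded, so the estimate holds trivially after enlarging the constant.
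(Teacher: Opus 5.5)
Your proposal is correct and, once you settle on $C=2\pi$ and $x=t$, it is the paper's proof: Lemma~\ref{MiyaMura_Thm1.2} then truncates exactly at $m_j\le t$, and each of the $2^r-1$ secondary terms is $\ll t^{r-\sigma}/t^{r}\ll t^{-\sigma}\le t^{r-1-\sigma}$. The preliminary choice $C=2$ was unnecessary, and your remark that bounded $t$ is handled by enlarging the constant is a harmless detail the paper leaves implicit.
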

\begin{proof}
Set $C=2\pi$ and $x=t$ in Lemma~\ref{MiyaMura_Thm1.2}. Then $|t|\le 2\pi x/C$ holds trivially.
Moreover, for each nonempty subset $E\subseteq\{w_1,\dots,w_r\}$, we have
\[
\frac{(a+t\sum_{e\in E} e)^{r-s}}{(s-1)\cdots(s-r)w_1\cdots w_r}
\ll t^{r-\sigma}|t|^{-r}
\ll t^{-\sigma},
\]
hence the whole secondary term is $O(t^{-\sigma})$, which is absorbed by
$O(t^{r-1-\sigma})$ since $r\ge1$. This proves the corollary.
\end{proof}

\begin{lemma}\label{lem:upper_comparison}
Let $r-1<\sigma_1<\sigma_2<r$. Then, uniformly for
$s=\sigma+it$ with $\sigma_1<\sigma<\sigma_2$ and $t\ge1$,
\[
|\zeta_r(s,a,\mathbf w)|
\ll
\sum_{0\le m_1,\dots,m_r\le t}
\frac{1}{(a+\mathbf m\cdot\mathbf 1)^\sigma}
+ t^{r-1-\sigma}
\]
as $t \to \infty$, where the implied constant depends only on $r,\sigma_1,\sigma_2,a$, and $\mathbf w$.
\end{lemma}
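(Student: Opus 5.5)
The plan is to start from Corollary~\ref{cor:trunc_t}. Uniformly for $\sigma_1<\sigma<\sigma_2$ and $t\ge1$ it gives
\[
\zeta_r(s,a,\mathbf w)=\sum_{0\le m_1,\dots,m_r\le t}\frac{1}{(a+\mathbf m\cdot\mathbf w)^s}+O(t^{r-1-\sigma}).
\]
We apply the triangle inequality to the finite sum, using $|(a+\mathbf m\cdot\mathbf w)^{-s}|=(a+\mathbf m\cdot\mathbf w)^{-\sigma}$. This yields
\[
|\zeta_r(s,a,\mathbf w)|\le\sum_{0\le m_1,\dots,m_r\le t}\frac{1}{(a+\mathbf m\cdot\mathbf w)^{\sigma}}+O(t^{r-1-\sigma}).
\]
The implied constant in the $O$-term is uniform in the strip, as stated in the corollary.

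The remaining step is to compare $a+\mathbf m\cdot\mathbf w$ with $a+\mathbf m\cdot\mathbf 1$. Set $w_{\min}=\min_j w_j>0$ and $c=\min(1,w_{\min})>0$. Then for every $\mathbf m\in\mathbb Z_{\ge0}^r$,
\[
a+\mathbf m\cdot\mathbf w\ \ge\ a+w_{\min}(m_1+\cdots+m_r)\ \ge\ c\,(a+\mathbf m\cdot\mathbf 1).
\]
Since $\sigma>r-1\ge0$, raising to the power $-\sigma$ reverses the inequality:
\[
(a+\mathbf m\cdot\mathbf w)^{-\sigma}\le c^{-\sigma}(a+\mathbf m\cdot\mathbf 1)^{-\sigma}\le c^{-\sigma_2}(a+\mathbf m\cdot\mathbf 1)^{-\sigma}.
\]
The last inequality uses $c\le1$ and $\sigma<\sigma_2$, so the constant is uniform in $\sigma$. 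Summing over the box $0\le m_j\le t$ and combining with the previous display gives the claimed bound. The implied constant depends only on $r,\sigma_1,\sigma_2,a,\mathbf w$: it comes from the corollary's constant and from $c^{-\sigma_2}$.

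There is essentially no obstacle. The only point needing care is uniformity in $\sigma$. Corollary~\ref{cor:trunc_t} is stated uniformly, being inherited from Lemma~\ref{MiyaMura_Thm1.2} on $\sigma_1<\sigma<\sigma_2$, and the comparison constant was made uniform by using $\sigma_2$. For small $t$, say $1\le t\le t_0$, the statement is trivial: $\zeta_r$ is bounded on the compact set $\{\sigma_1\le\sigma\le\sigma_2,\ 1\le t\le t_0\}$, which avoids the poles at $s=1,\dots,r$ because $t\ge1$. Meanwhile the right-hand side is at least $a^{-\sigma}>0$ from the term $\mathbf m=\mathbf 0$, and this lower bound is uniform in $\sigma$. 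So the "as $t\to\infty$" formulation poses no difficulty.
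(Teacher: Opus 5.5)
Your proposal is correct and follows the paper's proof: truncate via Corollary~\ref{cor:trunc_t}, apply the triangle inequality, and compare $a+\mathbf m\cdot\mathbf w$ with $a+\mathbf m\cdot\mathbf 1$ using $w_{\min}$. Your constant $c=\min(1,w_{\min})$, with the $\sigma$-uniform bound $c^{-\sigma_2}$, is a slightly cleaner version of the paper's route through $a'=a/w_{\min}$ and $a'+x\asymp a+x$.
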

\begin{proof}
By Corollary~\ref{cor:trunc_t}, we have
\[
\zeta_r(s,a,\mathbf w)
=
\sum_{0\le m_1,\dots,m_r\le t}
\frac{1}{(a+\mathbf m\cdot\mathbf w)^s}
+O(t^{r-1-\sigma}),
\]
uniformly for $\sigma_1<\sigma<\sigma_2$.
Hence, by the triangle inequality,
\begin{align*}
|\zeta_r(s,a,\mathbf w)|
&\le
\sum_{0\le m_1,\dots,m_r\le t}
\frac{1}{(a+\mathbf m\cdot\mathbf w)^\sigma}
+O(t^{r-1-\sigma}).
\end{align*}
Let $ w_{\min}=\min\{w_1, \dots,w_r \} $ and put $a'=a/w_{\min}$. Then 
$ a+\mathbf m\cdot\mathbf w \ge w_{\min}(a’+\mathbf m\cdot\mathbf 1) $ and hence
$ (a+\mathbf m\cdot\mathbf w)^{-\sigma}
\le
w_{\min}^{-\sigma}(a'+\mathbf m\cdot\mathbf 1)^{-\sigma}
\ll
(a+\mathbf m\cdot\mathbf 1)^{-\sigma}
$.
Since $a'>0$ is fixed, we have
$ a'+x\asymp a+x \ (x\ge0) $, uniformly for $\sigma_1<\sigma<\sigma_2$. Therefore,
 \[
 |\zeta_r(s,a,\mathbf w)|
 \ll
 \sum_{0\le m_1,\dots,m_r\le t}
 \frac{1}{(a+\mathbf m\cdot\mathbf 1)^\sigma}
 +t^{r-1-\sigma},
 \]
as claimed.
\end{proof}

\bigskip

\subsection{Diagonal terms in the rank-one case}
\begin{lemma}\label{lem:A_n_asymp}
Let $p_1,\dots,p_r\in\Z_{>0}$ and define
\[
A(n)=\#\left\{\mathbf m\in\Z_{\ge0}^r:
p_1m_1+\cdots+p_rm_r=n\right\}.
\]
Then, as $n\to\infty$,
\[
A(n)=\frac{n^{r-1}}{(r-1)!\,p_1\cdots p_r}
+O(n^{r-2}).
\]
Consequently,
\[
A(n)^2=
\frac{n^{2r-2}}{\{(r-1)!\}^2(p_1\cdots p_r)^2}
+O(n^{2r-3}).
\]
\end{lemma}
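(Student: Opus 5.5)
The plan is to use the generating function of $A(n)$,
\[
\sum_{n\ge0}A(n)x^n=\prod_{j=1}^r\frac{1}{1-x^{p_j}},
\]
and to extract the asymptotics from the pole at $x=1$ by partial fractions. This is the classical Sylvester--Schur approach to restricted partitions.

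First we locate the poles. The rational function $F(x)=\prod_j(1-x^{p_j})^{-1}$ has poles only at roots of unity. At $x=1$ every factor vanishes to first order, so the pole there has order exactly $r$. At any other root of unity $\zeta\ne1$, the order of the pole equals $\#\{j: \zeta^{p_j}=1\}$. This number is at most $r-1$, since $\zeta^{p_j}=1$ for all $j$ would force $\zeta^{\gcd(p_1,\dots,p_r)}=1$. Note that if the $p_j$ are not coprime, $A(n)$ vanishes off multiples of the gcd and the stated asymptotic fails. So I will assume $\gcd(p_1,\dots,p_r)=1$, as in the setting of Theorem~\ref{main4_revised_v3}, where the $p_j$ are coprime by construction.

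Next comes the partial fraction expansion. We write
\[
F(x)=\sum_{\zeta}\sum_{k=1}^{\mathrm{ord}_\zeta}\frac{c_{\zeta,k}}{(1-x/\zeta)^k},
\]
plus possibly a polynomial, which does not affect $A(n)$ for large $n$. The coefficient of $x^n$ in $(1-x/\zeta)^{-k}$ is $\binom{n+k-1}{k-1}\zeta^{-n}$, a polynomial in $n$ of degree $k-1$ with bounded modulus factor. Hence every term with $\zeta\ne1$ contributes $O(n^{r-2})$, and so do the terms with $\zeta=1$ and $k\le r-1$. The leading term is $c_{1,r}\binom{n+r-1}{r-1}=c_{1,r}\,\frac{n^{r-1}}{(r-1)!}+O(n^{r-2})$.

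Then we compute the leading coefficient:
\[
c_{1,r}=\lim_{x\to1}(1-x)^rF(x)=\prod_j\lim_{x\to1}\frac{1-x}{1-x^{p_j}}=\frac{1}{p_1\cdots p_r}.
\]
This gives the first formula.

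For the square, we set $M(n)=n^{r-1}/((r-1)!\,p_1\cdots p_r)$ and write $A(n)=M(n)+E(n)$ with $E(n)=O(n^{r-2})$. Then $A(n)^2=M(n)^2+2M(n)E(n)+E(n)^2$, where $M(n)E(n)=O(n^{2r-3})$ and $E(n)^2=O(n^{2r-4})$.

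The only delicate point is the pole-order count at nontrivial roots of unity, which is exactly where coprimality enters; the rest is routine. An alternative route, should one prefer to avoid generating functions, is induction on $r$ via $A_r(n)=\sum_{0\le m\le n/p_r}A_{r-1}(n-p_rm)$ with Euler--Maclaurin summation. However, the base case $r=1$ again needs the gcd condition, and intermediate subsets of the $p_j$ may fail to be coprime, so I would choose the partial fraction argument.
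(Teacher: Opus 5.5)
Your argument is correct, but it takes a different route from the paper. The paper's sketch is geometric. It regards $A(n)$ as the number of lattice points on the dilate $n\mathcal P_1$ of the rational simplex $\{\mathbf y\in\R_{\ge0}^r: p_1y_1+\cdots+p_ry_r=1\}$, identifies $1/((r-1)!\,p_1\cdots p_r)$ as its lattice-normalized $(r-1)$-dimensional volume, and appeals to Ehrhart theory (or a volume-plus-boundary count) for the $O(n^{r-2})$ error. You instead work with $\prod_j(1-x^{p_j})^{-1}$ and Sylvester-type partial fractions: you read the main term off the order-$r$ pole at $x=1$, and bound every other contribution by the pole orders at the nontrivial roots of unity.

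Your version is complete and self-contained, and it shows exactly where coprimality enters. You are right that the lemma as stated is false without $\gcd(p_1,\dots,p_r)=1$ (take $p_1=p_2=2$). The paper's sketch silently needs this hypothesis too. The leading coefficient of the lattice-point count on $n\mathcal P_1$ equals $1/((r-1)!\,p_1\cdots p_r)$ for all large $n$ only in the coprime case; in general it depends on whether $\gcd(p_1,\dots,p_r)$ divides $n$. Since the hypothesis holds wherever the lemma is applied, adding it is legitimate.

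Your approach has a further advantage. Because $F$ is a proper rational function, there is in fact no polynomial part, so your expansion holds for every $n\ge0$. It shows that $A(n)$ is a quasi-polynomial of degree $r-1$ and period $\operatorname{lcm}(p_1,\dots,p_r)$, and that its top coefficient is independent of the residue class of $n$. These are precisely the facts the paper later quotes without proof in Lemmas~\ref{lem:rank1_hurwitz_decomp_explicit} and~\ref{lem:rank1_top_coeff_explicit}. In exchange, the geometric route gives a conceptual interpretation of the constant as a volume and extends to lattice-point counts in more general polytopes.
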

\begin{proof}[Sketch of proof]
Consider the polytope
\[
\mathcal P_n:=\Bigl\{\mathbf x\in\R_{\ge0}^r:\ p_1x_1+\cdots+p_rx_r=n\Bigr\}.
\]
Then $A(n)$ equals the number of lattice points on $\mathcal P_n$.
After the scaling $x_j=n y_j$, we obtain the simplex
\[
\mathcal P_1=\Bigl\{\mathbf y\in\R_{\ge0}^r:\ p_1y_1+\cdots+p_ry_r=1\Bigr\},
\]
whose $(r-1)$-dimensional volume is
\[
\operatorname{Vol}_{r-1}(\mathcal P_1)=\frac{1}{(r-1)!\,p_1\cdots p_r}.
\]
(Standard: in coordinates $u_j=p_jy_j$ the hyperplane becomes $u_1+\cdots+u_r=1$, and the Jacobian is
$(p_1\cdots p_r)^{-1}$.)
Hence the main term is $\operatorname{Vol}_{r-1}(\mathcal P_1)\,n^{r-1}$.
A standard lattice-point estimate for dilates of rational polytopes (e.g. Ehrhart theory, or a volume
argument with boundary contribution) yields the error term $O(n^{r-2})$, giving
\[
A(n)=\frac{n^{r-1}}{(r-1)!\,p_1\cdots p_r}+O(n^{r-2}).
\]
Squaring this asymptotic gives the stated formula for $A(n)^2$.
\end{proof}

\medskip

\begin{lemma}\label{lem:tilde_zeta_pole}
Assume $d=1$ and write $w_j=\lambda p_j$ $(1\le j\le r)$.
Then the diagonal series
\[
\tilde{\zeta}_r(s,a,\mathbf w)
=
\sum_{n=0}^{\infty}
\frac{A(n)^2}{(a+\lambda n)^{2s}}
\]
admits a meromorphic continuation to $\Re(s)>r-1$
and has a simple pole at $s=r-1/2$ with residue
\[
\frac{\lambda^{1-2r}}{2\{(r-1)!\}^2(p_1\cdots p_r)^2}.
\]
\end{lemma}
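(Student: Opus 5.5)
Write $\tilde{\zeta}_r(s,a,\mathbf w)=\lambda^{-2s}\sum_{n\ge0}A(n)^2(n+a/\lambda)^{-2s}$. Set $b=a/\lambda>0$ and $c=\{(r-1)!\,p_1\cdots p_r\}^{-2}$.

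The plan is to show that $A(n)^2$ is a polynomial in $n$ on each residue class modulo $P=\operatorname{lcm}(p_1,\dots,p_r)$. We then reduce to a finite linear combination of Hurwitz zeta functions, just as in \eqref{HMZ-HZ}. The starting point is the generating function
\[
\sum_{n\ge0}A(n)x^n=\prod_{j=1}^r(1-x^{p_j})^{-1}.
\]
Its poles are roots of unity whose orders divide $P$, and the pole at $x=1$ has order $r$ while all others have order at most $r-1$. Partial fractions then make $A(n)$ a quasi-polynomial of degree $r-1$ and period $P$. The leading coefficient is the constant $\{(r-1)!\,p_1\cdots p_r\}^{-1}$, coming from the order-$r$ pole at $x=1$, in agreement with Lemma~\ref{lem:A_n_asymp}. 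Hence for each residue $\rho\in\{0,\dots,P-1\}$ there is a polynomial $Q_\rho$ of degree $2r-2$ with $A(Pq+\rho)^2=Q_\rho(Pq+\rho)$ for all $q\ge0$. Each $Q_\rho$ has the same leading coefficient $c$.

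Next I expand $Q_\rho$ around $-b$. I write $Q_\rho(n)=\sum_{j=0}^{2r-2}c_{\rho,j}(n+b)^j$ with $c_{\rho,2r-2}=c$. Putting $n=Pq+\rho$ gives
\[
\sum_{n\equiv\rho\ (P)}\frac{A(n)^2}{(n+b)^{2s}}
=\sum_{j=0}^{2r-2}c_{\rho,j}P^{\,j-2s}\,\zeta_H\!\Bigl(2s-j,\frac{\rho+b}{P}\Bigr),
\]
valid first for $\Re(s)>r-1/2$ by absolute convergence.

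Each $\zeta_H(\cdot,\alpha)$ is meromorphic on $\C$ with only a simple pole at $1$ of residue $1$. So the right-hand side gives a meromorphic continuation of $\tilde{\zeta}_r$ to all of $\C$, in particular to $\Re(s)>r-1$. In the half-plane $\Re(s)>r-1$, the term with index $j$ has a pole only where $2s-j=1$, that is $s=(j+1)/2$. For $j\le 2r-3$ this gives $s\le r-1$, outside the region. So the only singularity in $\Re(s)>r-1$ comes from $j=2r-2$, at $s=r-1/2$.

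For the residue, note that $2s-(2r-2)=1+2(s-r+1/2)$. Hence the $j=2r-2$ term has residue $\tfrac12 c\,P^{2r-2-(2r-1)}=c/(2P)$ at $s=r-1/2$, for each $\rho$. Summing over the $P$ residue classes gives total residue $c/2$. Multiplying by $\lambda^{-2s}$ evaluated at $s=r-1/2$, namely $\lambda^{1-2r}$, gives exactly
\[
\frac{\lambda^{1-2r}}{2\{(r-1)!\}^2(p_1\cdots p_r)^2}.
\]

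The main obstacle is the first step, namely justifying rigorously that $A(n)$ is a quasi-polynomial whose leading coefficient does not depend on the residue class. The lower coefficients may oscillate with $\rho$, but they do not affect the pole. I expect to take this from the partial fraction argument above, or to cite Ehrhart quasi-polynomial theory, which is consistent with the sketch of Lemma~\ref{lem:A_n_asymp}.

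As a fallback that avoids exact polynomiality, I would use Lemma~\ref{lem:A_n_asymp} directly. It gives $A(n)^2=c\,n^{2r-2}+O(n^{2r-3})$, so
\[
\tilde{\zeta}_r(s,a,\mathbf w)=\lambda^{-2s}\bigl\{c\,\zeta_H(2s-2r+2,b)+R(s)\bigr\},
\]
where $R(s)$ collects the error terms. $R$ converges absolutely and is holomorphic only for $\Re(s)>r-1$. Strictly, we need $R(s)=\sum_n \{A(n)^2-c(n+b)^{2r-2}\}(n+b)^{-2s}$ with coefficients $O(n^{2r-3})$, which holds since $(n+b)^{2r-2}-n^{2r-2}=O(n^{2r-3})$. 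This suffices for the lemma as stated, giving the same residue.
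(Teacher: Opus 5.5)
Your argument is correct, and your main route differs from the paper's. The paper's proof is essentially your fallback. It uses only the asymptotic $A(n)^2=c\,n^{2r-2}+O(n^{2r-3})$ from Lemma~\ref{lem:A_n_asymp} and splits off $c\sum_{n\ge1}n^{2r-2}(a+\lambda n)^{-2s}$. It then compares that sum with $\lambda^{-2s}\zeta(2s-2r+2)$ via $(1+a/(\lambda n))^{-2s}-1=O(1/n)$, and all remainders converge absolutely on $\Re(s)>r-1$. Your fallback compares instead with $c\,\zeta_H(2s-2r+2,a/\lambda)$, which is slightly cleaner because it avoids that binomial correction. That correction is really $O(|s|/n)$, uniform on compact sets rather than on vertical strips as the paper asserts, though this is harmless for holomorphy.

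Your main route uses the exact quasi-polynomial structure of $A(n)$, obtained from the partial fractions of $\prod_j(1-x^{p_j})^{-1}$. This writes $\tilde{\zeta}_r$ as a finite combination of $P^{j-2s}\zeta_H(2s-j,(\rho+b)/P)$, in the same spirit as Lemma~\ref{lem:rank1_hurwitz_decomp_explicit}. It gives more than the lemma asks:
\begin{itemize}
\item a continuation to all of $\C$, with poles possible only at $s=(j+1)/2$;
\item an actual proof of Lemma~\ref{lem:A_n_asymp}, including that the leading coefficient is the same on every residue class, which the paper only sketches via Ehrhart theory.
\end{itemize}
In exchange, the paper's route needs nothing beyond a two-term asymptotic for $A(n)^2$, so it would apply to any coefficients with that behaviour.

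In both routes, the fact that every pole $x\ne1$ has order at most $r-1$ is exactly where $\gcd(p_1,\dots,p_r)=1$ enters. Equivalently, this is what makes the leading coefficient $\{(r-1)!\,p_1\cdots p_r\}^{-1}$ on every class. You should say so explicitly, since with $\gcd=g>1$ the residue in the stated form would be off by a factor $g$.
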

\begin{proof}
By Lemma~\ref{lem:A_n_asymp}, write
\[
A(n)^2= \frac{1}{\{(r-1)!\}^2(p_1\cdots p_r)^2} n^{2r-2}+O(n^{2r-3}),
\]
Put
\[
\tilde{\zeta}_r(s,a,\mathbf w)
=\frac{A(0)^2}{a^{2s}}+\sum_{n\ge1}\frac{A(n)^2}{(a+\lambda n)^{2s}}.
\]
Split the sum for $n\ge1$ as
\begin{align*}
\sum_{n\ge1}\frac{A(n)^2}{(a+\lambda n)^{2s}}
&=\frac{1}{\{(r-1)!\}^2(p_1\cdots p_r)^2}\sum_{n\ge1}\frac{n^{2r-2}}{(a+\lambda n)^{2s}} \\
& \qquad +\sum_{n\ge1}\frac{1}{(a+\lambda n)^{2s}} 
 \left\{ A(n)^2-\frac{n^{2r-2}}{\{(r-1)!\}^2(p_1\cdots p_r)^2}  \right\} \\
&= \frac{1}{\{(r-1)!\}^2(p_1\cdots p_r)^2}\sum_{n\ge1}\frac{n^{2r-2}}{(a+\lambda n)^{2s}} 
 + O \left( \sum_{n \ge 1} \frac{n^{2r-3}}{(a+\lambda n)^{2s}} \right).
\end{align*}
The second series converges absolutely for $\Re(s)>r-1$, since the numerator is $O(n^{2r-3})$,
and hence defines a holomorphic function on $\Re(s)>r-1$.
For the first series, write
\[
(a+\lambda n)^{-2s}=\lambda^{-2s}n^{-2s}\Bigl(1+\frac{a}{\lambda n}\Bigr)^{-2s}.
\]
Then
\[
\sum_{n\ge1}\frac{n^{2r-2}}{(a+\lambda n)^{2s}}
=
\sum_{n\ge1}\frac{\lambda^{-2s}}{n^{2s-2r+2}}
+\sum_{n\ge1}\frac{\lambda^{-2s}}{n^{2s-2r+2}}
\Bigl\{\Bigl(1+\frac{a}{\lambda n}\Bigr)^{-2s}-1\Big\}.
\]
The bracket is $O(1/n)$ uniformly on vertical strips, so the second sum converges absolutely for
$\Re(s)>r-1$ and is holomorphic there. Therefore we obtain, for $\Re(s)>r-1$,
\[
\tilde{\zeta}_r(s,a,\mathbf w)
=
\frac{\lambda^{-2s}}{\{(r-1)!\}^2(p_1\cdots p_r)^2}\zeta(2s-2r+2)+H(s),
\]
where $H(s)$ is holomorphic on $\Re(s)>r-1$.
Since $\zeta(u)$ has a simple pole at $u=1$ with residue $1$, the only pole in this region occurs
when $2s-2r+2=1$, i.e. $s=r-1/2$. The residue equals
\[
\left[\frac{\lambda^{-2s}}{\{(r-1)!\}^2(p_1\cdots p_r)^2}\cdot \frac{1}{2}\right]_{s=r-1/2}
=\frac{\lambda^{1-2r}}{2\{(r-1)!\}^2(p_1\cdots p_r)^2}.
\]
This proves the lemma.
\end{proof}
\bigskip

\begin{lemma}\label{lem:log_asymp}
Under the assumptions of Lemma~\ref{lem:tilde_zeta_pole},
\[
\sum_{n\le X}
\frac{A(n)^2}{(a+\lambda n)^{2r-1}}
=
\frac{\lambda^{1-2r}}{\{(r-1)!\}^2(p_1\cdots p_r)^2}
\log X
+O(1).
\]
\end{lemma}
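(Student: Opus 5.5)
The plan is to substitute the asymptotic for $A(n)^2$ from Lemma~\ref{lem:A_n_asymp} directly into the partial sum and compare with a harmonic-type sum. Write $C_0=\{(r-1)!\}^{-2}(p_1\cdots p_r)^{-2}$. Split off $n=0$, which contributes the constant $A(0)^2a^{1-2r}=a^{1-2r}$. For $1\le n\le X$ write
\[
\frac{A(n)^2}{(a+\lambda n)^{2r-1}}
=C_0\frac{n^{2r-2}}{(a+\lambda n)^{2r-1}}
+O\!\left(\frac{n^{2r-3}}{(a+\lambda n)^{2r-1}}\right).
\]
Since $(a+\lambda n)^{2r-1}\asymp n^{2r-1}$, the error terms are $O(n^{-2})$, so their total over $n\ge1$ is $O(1)$.

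For the main term, use
\[
(a+\lambda n)^{-(2r-1)}=\lambda^{1-2r}n^{1-2r}\bigl(1+a/(\lambda n)\bigr)^{-(2r-1)}=\lambda^{1-2r}n^{1-2r}\bigl(1+O(1/n)\bigr).
\]
This gives $n^{2r-2}(a+\lambda n)^{1-2r}=\lambda^{1-2r}n^{-1}+O(n^{-2})$. Summing over $1\le n\le X$ and applying $\sum_{n\le X}1/n=\log X+\gamma+O(1/X)$ yields $C_0\lambda^{1-2r}\log X+O(1)$, which is the claim.

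Most of this is routine once Lemma~\ref{lem:A_n_asymp} is granted. The real content lies in that lemma's error term $O(n^{r-2})$, since this is what makes the second-order contribution summable. That error must hold uniformly, including for non-generic $p_j$ where $A(n)$ is a quasi-polynomial with periodic lower coefficients. If any doubt remains, I would justify it via the partial-fraction expansion of $\prod_j(1-z^{p_j})^{-1}$. The only pole of order $r$ is at $z=1$, and every other root of unity gives a pole of order at most $r-1$. Hence $A(n)=C n^{r-1}+O(n^{r-2})$ with $C=1/((r-1)!\,p_1\cdots p_r)$, independently of $\gcd$ considerations.

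One could alternatively deduce the result from Lemma~\ref{lem:tilde_zeta_pole} by a Tauberian argument. However, the direct elementary summation above gives the sharper $O(1)$ remainder without any Tauberian machinery, so I would use it.
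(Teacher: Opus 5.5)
Your argument is correct and is essentially the paper's proof. You insert $A(n)^2=C_0n^{2r-2}+O(n^{2r-3})$ from Lemma~\ref{lem:A_n_asymp}, expand $(a+\lambda n)^{-(2r-1)}=\lambda^{1-2r}n^{1-2r}(1+O(1/n))$ to get $C_0\lambda^{1-2r}/n+O(1/n^2)$, and sum over $n\le X$; splitting off $n=0$ is a harmless extra detail.

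One correction to your side remark: the claim that $z=1$ is the only pole of order $r$ of $\prod_j(1-z^{p_j})^{-1}$ does depend on $\gcd(p_1,\dots,p_r)=1$. If some $g\ge 2$ divided every $p_j$, a primitive $g$-th root of unity would also be a pole of order $r$. So the claim holds under the paper's normalization, not independently of gcd considerations.
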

\begin{proof}
By Lemma~\ref{lem:A_n_asymp},
\[
A(n)^2=\frac{n^{2r-2}}{\{(r-1)!\}^2(p_1\cdots p_r)^2}+O(n^{2r-3}).
\]
Moreover,
$
(a+\lambda n)^{-(2r-1)}=\lambda^{1-2r}n^{-(2r-1)}(1+O(1/n)),
$
so
\[
\frac{A(n)^2}{(a+\lambda n)^{2r-1}}
=
\frac{\lambda^{1-2r}}{\{(r-1)!\}^2(p_1\cdots p_r)^2}\frac{1}{n}+O\Bigl(\frac{1}{n^2}\Bigr).
\]
Summing over $n\le X$, we obtain this lemma.
\end{proof}

\begin{lemma}\label{lem:rank1_hurwitz_decomp_explicit}
Assume that $d=1$, and write $w_j=\lambda p_j$ $(1\le j\le r)$,
where $\lambda>0$ and $p_1,\dots,p_r\in\mathbb Z_{>0}$ with
$\gcd(p_1,\dots,p_r)=1$.
Let $q=\operatorname{lcm}(p_1,\dots,p_r)$.
Then there exist constants $c_{k,\nu}$ $(0\le k\le r-1,\ 0\le \nu\le q-1)$
and positive real numbers $\alpha_\nu$ such that
\[
\zeta_r(s,a,\mathbf w)
=
(\lambda q)^{-s}
\sum_{\nu=0}^{q-1}\sum_{k=0}^{r-1}
c_{k,\nu}\,\zeta_H(s-k,\alpha_\nu),
\]
where $\zeta_H(s,a)$ denotes the Hurwitz zeta function.
\end{lemma}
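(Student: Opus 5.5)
Since $w_j=\lambda p_j$, we have $a+\mathbf m\cdot\mathbf w=\lambda(a/\lambda+\mathbf m\cdot\mathbf p)$, so
\[
\zeta_r(s,a,\mathbf w)=\lambda^{-s}\sum_{n\ge0}\frac{A(n)}{(a/\lambda+n)^{s}},
\]
with $A(n)$ as in Lemma~\ref{lem:A_n_asymp}. The plan is to show that $A(n)$ is a \emph{quasi-polynomial} of period $q=\operatorname{lcm}(p_1,\dots,p_r)$ and degree at most $r-1$. That is, for each residue $\nu\in\{0,\dots,q-1\}$ there is a polynomial $P_\nu$ with $\deg P_\nu\le r-1$ such that $A(qj+\nu)=P_\nu(j)$ for all $j\ge0$. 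This follows from the generating function
\[
\sum_{n\ge0}A(n)x^n=\prod_{i=1}^r(1-x^{p_i})^{-1}.
\]
Each factor satisfies $1-x^{p_i}\mid 1-x^q$. Hence the product equals $Q(x)/(1-x^q)^r$ with $Q$ a polynomial. Expand $(1-x^q)^{-r}=\sum_j\binom{j+r-1}{r-1}x^{qj}$ and group the coefficients of $Q$ by the residue of the exponent mod $q$. This yields $A(qj+\nu)$ as a finite linear combination of $\binom{j-c+r-1}{r-1}$ over the shifts $c$, which is a polynomial in $j$ of degree at most $r-1$ for $j$ large.

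The one genuine obstacle is the finitely many small $j$. Exactness for all $j\ge0$ needs $\deg Q<qr$, so that no correction terms appear. We have $\deg Q=qr-\sum p_i<qr$, so the rational function is proper. For a proper rational function whose only poles are at $q$-th roots of unity, each of order at most $r$, the coefficients are an exact quasi-polynomial for all $n\ge0$. This comes from partial fractions $\sum_\zeta\sum_{k\le r}b_{\zeta,k}(1-x/\zeta)^{-k}$, with no polynomial part. So $A(qj+\nu)=P_\nu(j)$ holds for every $j\ge0$.

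Next, splitting $n=qj+\nu$ gives
\[
a/\lambda+n=q\,(j+\alpha_\nu),\qquad \alpha_\nu:=\frac{a/\lambda+\nu}{q}>0.
\]
Then rewrite $P_\nu(j)$ as a polynomial in $(j+\alpha_\nu)$, namely $P_\nu(j)=\sum_{k=0}^{r-1}c_{k,\nu}(j+\alpha_\nu)^k$, by Taylor expansion about $-\alpha_\nu$. Substituting,
\[
\zeta_r(s,a,\mathbf w)=\lambda^{-s}q^{-s}\sum_{\nu=0}^{q-1}\sum_{k=0}^{r-1}c_{k,\nu}\sum_{j\ge0}(j+\alpha_\nu)^{k-s}
=(\lambda q)^{-s}\sum_{\nu}\sum_{k}c_{k,\nu}\,\zeta_H(s-k,\alpha_\nu).
\]
This identity is valid for $\sigma>r$, where everything converges absolutely, and it extends to all $s$ by meromorphic continuation.
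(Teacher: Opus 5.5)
Your proof is correct and follows the same route as the paper: group the terms by $n=p_1m_1+\cdots+p_rm_r$, use that $A(n)$ is a quasi-polynomial of period $q$, split $n=qj+\nu$, and rewrite each residue class as a combination of $\zeta_H(s-k,\alpha_\nu)$ with $\alpha_\nu=(a+\lambda\nu)/(\lambda q)$. The paper only cites quasi-polynomiality as well known, whereas you prove it via the generating function, including exactness for all $n\ge0$ (from $\deg Q<qr$); you also correctly expand $P_\nu(j)$ in powers of $j+\alpha_\nu$ rather than $j$, a step the paper's write-up glosses over.
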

\begin{proof}
Since $d=1$, we may write $w_j=\lambda p_j$ with
$\lambda>0$ and $p_j\in\mathbb Z_{>0}$.
Then
\[
\zeta_r(s,a,\mathbf w)
=
\sum_{m_1,\dots,m_r \ge 0 }
\frac{1}{(a+\lambda(p_1m_1+\cdots+p_rm_r))^s}.
\]
Grouping terms according to the value $ n=p_1m_1+\cdots+p_rm_r$,
we obtain
\[
\zeta_r(s,a,\mathbf w)
=
\sum_{n=0}^\infty \frac{A(n)}{(a+\lambda n)^s},
\]
where
\[
A(n)
=
\#\{\mathbf m\in\mathbb Z_{\ge0}^r:
p_1m_1+\cdots+p_rm_r=n\}.
\]
It is well known that $A(n)$ is a quasi-polynomial of degree $r-1$
with period $q=\operatorname{lcm}(p_1,\dots,p_r)$.
Hence we may write
\[
A(n)
=
\sum_{k=0}^{r-1} c_{k,\nu}\,n^k
\quad (n\equiv \nu \bmod q),
\]
for suitable constants $c_{k,\nu}$. Therefore
\[
\zeta_r(s,a,\mathbf w)
=
\sum_{\nu=0}^{q-1}
\sum_{\substack{n\ge0\\ n\equiv \nu\ (\mathrm{mod}\ q)}}
\frac{A(n)}{(a+\lambda n)^s}.
\]
Writing $n=q m+\nu$, we obtain
\[
\zeta_r(s,a,\mathbf w)
=
\sum_{\nu=0}^{q-1}
\sum_{m=0}^\infty
\frac{A(qm+\nu)}{(a+\lambda(qm+\nu))^s}.
\]
Using the polynomial expression of $A(qm+\nu)$ and expanding
$(qm+\nu)^k$ as a polynomial in $m$, we can express the above as
a finite linear combination of series of the form
\[
\sum_{m=0}^\infty (m+\alpha_\nu)^{-s+k}
=
\zeta_H(s-k,\alpha_\nu),
\]
where $\alpha_\nu=(a+\lambda \nu)/(\lambda q)$.
Thus we obtain the desired decomposition.
\end{proof}

\bigskip

\begin{lemma}\label{lem:rank1_top_coeff_explicit}
Assume that $d=1$, and write $w_j=\lambda p_j$ $(1\le j\le r)$,
where $\lambda>0$ and $p_1,\dots,p_r\in\mathbb Z_{>0}$ with
$\gcd(p_1,\dots,p_r)=1$.
Let $q=\operatorname{lcm}(p_1,\dots,p_r)$.
In the decomposition
\[
\zeta_r(s,a,\mathbf w)
=
(\lambda q)^{-s}
\sum_{\nu=0}^{q-1}\sum_{k=0}^{r-1}
c_{k,\nu}\,\zeta_H(s-k,\alpha_\nu),
\]
the coefficients corresponding to the top shift $k=r-1$ satisfy
\[
\sum_{\nu=0}^{q-1} c_{r-1,\nu}
=
\frac{q^{\,r-1}}{(r-1)!\,p_1\cdots p_r}.
\]
Consequently, the contribution of the top-shift terms to the mean square value is
 \[
 \frac{\lambda^{1-2r}}{\{(r-1)!\}^2(p_1\cdots p_r)^2}.
 \]
\end{lemma}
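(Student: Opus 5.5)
The plan is to compute the top coefficients $c_{r-1,\nu}$ directly from the construction in the proof of Lemma~\ref{lem:rank1_hurwitz_decomp_explicit}, using the leading term of $A(n)$ from Lemma~\ref{lem:A_n_asymp}. The mean-square constant then comes from the diagonal-term analysis already done in Lemmas~\ref{lem:tilde_zeta_pole} and \ref{lem:log_asymp}.

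\emph{Step 1: the leading coefficient of each quasi-polynomial branch.} On each residue class $n\equiv\nu \pmod q$ write $A(n)=\sum_{k=0}^{r-1} c'_{k,\nu} n^k$. Letting $n\to\infty$ inside a single class and comparing with Lemma~\ref{lem:A_n_asymp}, the top coefficient is forced to be
\[
c'_{r-1,\nu}=\frac{1}{(r-1)!\,p_1\cdots p_r},
\]
the same for every $\nu$. This works because a polynomial of degree $r-1$ agreeing with $n^{r-1}/((r-1)!\,p_1\cdots p_r)+O(n^{r-2})$ along an arithmetic progression must have exactly that leading coefficient.

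\emph{Step 2: re-expand in the Hurwitz variable.} Put $n=qm+\nu$, so that $a+\lambda n=\lambda q(m+\alpha_\nu)$ and $n=q(m+\alpha_\nu)-a/\lambda$. Expanding $n^k$ in powers of $(m+\alpha_\nu)$, only $k=r-1$ contributes to the power $(m+\alpha_\nu)^{r-1}$, with coefficient $c'_{r-1,\nu}q^{r-1}$. Since
\[
(\lambda q)^{-s}(m+\alpha_\nu)^{r-1-s}
\]
is exactly the $m$-th term of $(\lambda q)^{-s}\zeta_H(s-r+1,\alpha_\nu)$, we get
\[
c_{r-1,\nu}=\frac{q^{r-1}}{(r-1)!\,p_1\cdots p_r}
\]
for each $\nu$.

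\emph{A normalization caveat.} Summing over the $q$ residues literally gives $q^{r}/((r-1)!\,p_1\cdots p_r)$, not $q^{r-1}/((r-1)!\,p_1\cdots p_r)$. So I expect the displayed identity is meant as the average $q^{-1}\sum_\nu c_{r-1,\nu}$, or equivalently as the per-$\nu$ value. I would state the per-$\nu$ value precisely, since that is what the mean-square computation actually uses. Reconciling this normalization is where I expect the main obstacle to lie.

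\emph{Step 3: the mean-square constant.} At $\sigma=r-1/2$ we have $|(\lambda q)^{-s}|^2=(\lambda q)^{1-2r}$. The top-shift part is $(\lambda q)^{-s}\sum_\nu c_{r-1,\nu}\zeta_H(s-r+1,\alpha_\nu)$, whose Dirichlet series has frequencies $\log(\lambda q(m+\alpha_\nu))=\log(a+\lambda n)$, and these are distinct for distinct pairs $(m,\nu)$.

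\begin{itemize}
\item \emph{Cross terms.} Terms with $\nu\neq\nu'$ are off-diagonal. They are controlled by Lemma~\ref{MonVau} applied to the truncated sums of Lemma~\ref{lem:hz_mixed_mean} (the argument with $k=l=r-1$), and contribute only $O(T)$.
\item \emph{Diagonal terms.} Each $\nu$ contributes $c_{r-1,\nu}^2(\lambda q)^{1-2r}\,T\log T$, by the $T\log T$ behaviour of $\int|\zeta_H(1/2+it,\alpha_\nu)|^2dt$ in Proposition~\ref{zeta_L_H_1/2_1}.
\end{itemize}

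Summing over the $q$ classes gives
\[
q\cdot\frac{q^{2r-2}}{\{(r-1)!\}^2(p_1\cdots p_r)^2}\,(\lambda q)^{1-2r}=\frac{\lambda^{1-2r}}{\{(r-1)!\}^2(p_1\cdots p_r)^2},
\]
which is the claimed constant. As a consistency check, the same constant is the coefficient of $\log X$ in Lemma~\ref{lem:log_asymp}, that is, twice the residue in Lemma~\ref{lem:tilde_zeta_pole}.
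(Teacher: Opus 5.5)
Your proposal is correct and follows the paper's own argument. The paper likewise reads off $c_{r-1,\nu}=q^{r-1}/((r-1)!\,p_1\cdots p_r)$ for every $\nu$ from Lemma~\ref{lem:A_n_asymp}, using that the leading coefficient does not depend on the residue class. It then computes $\sum_{\nu}c_{r-1,\nu}^2=q^{2r-1}/(\{(r-1)!\}^2(p_1\cdots p_r)^2)$, which after the factor $(\lambda q)^{1-2r}$ gives the stated constant.

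Your normalization caveat is justified: the paper never actually proves the displayed identity for $\sum_\nu c_{r-1,\nu}$, and its true value is $q^{r}/((r-1)!\,p_1\cdots p_r)$. Your expansion in powers of $m+\alpha_\nu$ rather than $m$ is also the right way to arrive at $\zeta_H(s-k,\alpha_\nu)$. The one adjustment is for the cross terms $\nu\ne\nu'$: these need the general Montgomery--Vaughan inequality for the merged frequencies $\log(a+\lambda n)$, because Lemma~\ref{MonVau} as stated covers only a single shift.
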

\begin{proof}
Recall that
\[
\zeta_r(s,a,\mathbf w)
=
\sum_{n=0}^\infty \frac{A(n)}{(a+\lambda n)^s},
\]
where
\[
A(n)
=
\#\{\mathbf m\in\mathbb Z_{\ge0}^r:
p_1m_1+\cdots+p_rm_r=n\}.
\]
By Lemma~\ref{lem:A_n_asymp}, we have
\[
A(n)
=
\frac{n^{r-1}}{(r-1)!\,p_1\cdots p_r}
+O(n^{r-2}).
\]
For each residue class $\nu \pmod q$, write $n=qm+\nu$. Then
\[
A(qm+\nu)
=
\frac{(qm+\nu)^{r-1}}{(r-1)!\,p_1\cdots p_r}+O(m^{r-2})
=
\frac{q^{r-1}m^{r-1}}{(r-1)!\,p_1\cdots p_r}+O(m^{r-2})
\]
as $m \to \infty$.
On the other hand, in the proof of Lemma~\ref{lem:rank1_hurwitz_decomp_explicit},
the coefficient $c_{k,\nu}$ arises from the polynomial expansion of $A(qm+\nu)$
as a polynomial in $m$.
Since $A(n)$ is a quasi-polynomial of degree $r-1$ with period $q$,
its leading coefficient is independent of the residue class $\nu$.
Therefore the coefficient of the top-degree term is
\[
c_{r-1,\nu}
=
\frac{q^{r-1}}{(r-1)!\,p_1\cdots p_r}.
\]
Hence
\[
\sum_{\nu=0}^{q-1} c_{r-1,\nu}^2
=
q\left(\frac{q^{r-1}}{(r-1)!\,p_1\cdots p_r}\right)^2
=
\frac{q^{2r-1}}{\{(r-1)!\}^2(p_1\cdots p_r)^2}.
\]
This proves the lemma.
\end{proof}

\section{Proofs of the Main Theorems}

\medskip
\noindent
\textbf{Proof of Theorem \ref{zeta_r(1/2,a)}}\\
Using \eqref{HMZ-HZ}, the mean square values of \( \zeta_r(\sigma + it, a, \mathbf{1}) \) are given by
\begin{align}
 \int_1^T |\zeta_r(s,a,\mathbf{1})|^2 dt 
 &=\sum_{k=0}^{r-1} p_{r,k}(a)^2 \int_1^T |\zeta_H(s-k, a)|^2dt   \nonumber \\
 & \quad + 2 \sum_{0\le k<l\le r-1} p_{r,k}(a)p_{r,l}(a)\cdot 
    \mathrm{Re}\left( \int_1^T \zeta_H(s-k,a) \overline{\zeta_H(s-l,a)} dt \right)
    \nonumber\\
 &=\frac{1}{((r-1)!)^2}\int_1^T |\zeta_H(\sigma-r+1,a)|^2dt + \sum_{k=0}^{r-2} p_{r,k}(a)^2 \int_1^T |\zeta_H(s-k, a)|^2dt \nonumber\\
 & \quad + 2 \sum_{0\le k<l\le r-1} p_{r,k}(a)p_{r,l}(a)\cdot 
    \mathrm{Re}\left( \int_1^T \zeta_H(s-k,a) \overline{\zeta_H(s-l,a)} dt \right).
    \label{int_zeta_r(s,a,1)}
\end{align}
By \eqref{G_and_L_2} and Proposition \ref{Hurwitz_0<siga<1/2}, we have 
% \begin{align*}
%     &\frac{1}{((r-1)!)^2} \int_1^T|\zeta_H(\sigma-r+1+it,a)|^2dt \\
%     &= \frac{1}{((r-1)!)^2}
%       \Bigg\{ \zeta_H(2\sigma-2r+2,a)T + \frac{(2\pi)^{2\sigma-2r+1}}{2r-2\sigma}\zeta(2r-2\sigma)T^{2r-2\sigma} \\
%     & \qquad \qquad \qquad \qquad + O(T^{r-\sigma}\log{T}) + O(T^{(\sigma-r+1)/2}) \Bigg\}
% \end{align*}
\begin{align*}
 & \int_1^T|\zeta_H(\sigma-r+1+it,a)|^2dt \\
 &= \begin{cases}
     \zeta_H(2\sigma-2r+2,a)T + \frac{(2\pi)^{2\sigma-2r+1}}{2r-2\sigma}\zeta(2r-2\sigma)T^{2r-2\sigma} \\
    \qquad \qquad \qquad \qquad \qquad \qquad \qquad \qquad + O(T^{r-\sigma}\log{T})  & \text{if } \sigma>r-1/2, \\
      T\log{T} + \left(\gamma(a) + \gamma -1-\log{2\pi} \right)T + O(T^{1/2} \log{T}) & \text{if } \sigma=r-1/2, \\
    \frac{(2\pi)^{2\sigma-2r+1}}{2r-2\sigma} \zeta(2r-2\sigma) T^{2r-2\sigma} +\zeta_H(2\sigma-2r+2,a)T\\
     \qquad \qquad \qquad \qquad \qquad \qquad \qquad \qquad   +O(T^{r-\sigma}\log{T}) & \text{if }  r-1<\sigma< r-1/2.
    \end{cases} 
\end{align*}
This contribution corresponds to the case $k=r-1$ in \eqref{int_zeta_r(s,a,1)} and yields the main term of that expression.
The second term of \eqref{int_zeta_r(s,a,1)} is estimated by using 
\begin{align*}
  \int_1^T |\zeta_H(s-k, a)|^2dt = \zeta_H(2\sigma-2k,a)T+O(1)
\end{align*}
for $0 \le k \le r-2$.
The third term of \eqref{int_zeta_r(s,a,1)} is estimated by
\begin{align*}
  &2\sum_{0\le k<l\le r-1} p_{r,k}(a)p_{r,l}(a)\cdot 
    \mathrm{Re}\left( \int_1^T \zeta_H(s-k,a) \overline{\zeta_H(s-l,a)} dt \right) \\
  &=2 p_{r,r-1}(a)p_{r,r-2}(a)\zeta_H(2\sigma-2r+3,a)T +
 \begin{cases}
  O(1) & \text{if } r-1/2 < \sigma <r, \\[1ex]
  O\left(\log T\right) & \text{if } \sigma = r-1/2, \\[1ex]
  O(T^{2r-2\sigma-1}) & \text{if } r-1<\sigma<r-1/2,
 \end{cases} \\
  & \quad + 2\sum_{\substack{0\le k<l\le r-1\\(k,l)\ne(r-2,r-1)}} p_{r,k}(a)p_{r,l}(a) \zeta_H(2\sigma-k-l,a) T + O(1) \\
  &= 2\sum_{0 \le k<l \le r-1}p_{r,k}(a)p_{r,l}(a) \zeta_H(2\sigma-k-l,a)T + 
 \begin{cases}
  O(1) & \text{if } r-1/2 < \sigma <r, \\[1ex]
  O\left(\log T\right) & \text{if } \sigma = r-1/2, \\[1ex]
  O(T^{2r-2\sigma-1}) & \text{if } r-1<\sigma<r-1/2.
 \end{cases}
\end{align*}
This completes the proof of Theorem \ref{zeta_r(1/2,a)}.
\qed

\bigskip

\bigskip

% \begin{remark}[On possible extensions beyond $\sigma>r-1$]
% The arguments in this paper rely essentially on finite-sum approximations
% of the Barnes multiple zeta function truncated at height $t$ (Lemma \ref{MiyaMura_Thm1.2}),
% which are valid only in the half-plane $\sigma>r-1$.
% This restriction prevents us from extending the present proofs
% to the full range of $\sigma$.

% Nevertheless, heuristic considerations based on the structure of the
% level sets $\{\mathbf m\in\mathbb Z_{\ge0}^r : \mathbf m\cdot\mathbf w = u\}$
% suggest that the order of the mean square value of
% $\zeta_r(\sigma+it,a,\mathbf w)$ should depend on
% the $\mathbb Q$-rank $d=\dim_{\mathbb Q}\langle w_1,\dots,w_r\rangle$,
% even outside the region $\sigma>r-1$.

% Establishing such results in a rigorous manner would require
% new ideas beyond the finite-sum approximation method employed here,
% and we leave this problem for future investigation.
% \end{remark}

\noindent
\textbf{Proof of Theorem \ref{thm:Barnes_bounds}}\\
\noindent{(i) The case $\sigma>r$.}
Since the defining Dirichlet series
\[
\zeta_r(s,a,\mathbf w)
=
\sum_{m_1,\dots,m_r \ge 0}
\frac{1}{(a+\mathbf m\cdot\mathbf w)^s}
\]
converges absolutely for $\sigma>r$, we obtain the trivial bound $ \zeta_r(\sigma+it,a,\mathbf w)\ll 1 $.

\medskip
\noindent{(ii) The case $\sigma=r$.}
By Lemma~\ref{lem:upper_comparison}, we have
\[
|\zeta_r(r+it,a,\mathbf w)|
\ll
\sum_{0\le m_1,\dots,m_r\le t}
\frac{1}{(a+\mathbf m\cdot\mathbf 1)^r}
+1.
\]
Writing $N=m_1+\cdots+m_r$, we obtain
\[
\sum_{0\le m_1,\dots,m_r\le t}
\frac{1}{(a+\mathbf m\cdot\mathbf 1)^r}
\ll
\sum_{N\le rt}\frac{N^{r-1}}{(a+N)^r}
\ll
\sum_{N\le rt}\frac{1}{N+1}
\ll \log t.
\]
Hence
\[
\zeta_r(r+it,a,\mathbf w)\ll \log t.
\]

\medskip
\noindent{(iii) The case $r-1<\sigma < r$.}
Again by Lemma~\ref{lem:upper_comparison}, we have
\[
|\zeta_r(\sigma+it,a,\mathbf w)|
\ll
\sum_{0\le m_1,\dots,m_r\le t}
\frac{1}{(a+\mathbf m\cdot\mathbf 1)^\sigma}
+t^{r-1-\sigma}.
\]
Writing $N=m_1+\cdots+m_r$, we obtain
\[
\sum_{0\le m_1,\dots,m_r\le t}
\frac{1}{(a+\mathbf m\cdot\mathbf 1)^\sigma}
\ll
\sum_{N\le rt}\frac{N^{r-1}}{(a+N)^\sigma}.
\]
Since $r-1<\sigma<r$, we have $-1<r-1-\sigma<0$, and hence
\[
\sum_{N\le rt}\frac{N^{r-1}}{(a+N)^\sigma}
\ll
\sum_{N\le rt}(N+1)^{r-1-\sigma}
\ll t^{r-\sigma}.
\]
Therefore
\[
\zeta_r(\sigma+it,a,\mathbf w)\ll t^{r-\sigma}.
\]
Since $t^{r-1-\sigma}=o(t^{r-\sigma})$, the error term is absorbed.
This completes the proof.
\qed

\bigskip

\noindent
\textbf{Proof of Theorem~\ref{main4_revised_v3}.}
\medskip

(I) The case $d=1$. 
Then there exist $\lambda>0$ and
$p_1,\dots,p_r\in\mathbb Z_{>0}$ such that
\[
w_j=\lambda p_j \qquad (1\le j\le r),
\]
with $\gcd(p_1,\dots,p_r)=1$.
\smallskip
(i) The case $\sigma=r-1/2$.
By Lemma~\ref{lem:rank1_hurwitz_decomp_explicit}, we may write
\[
\zeta_r(s,a,\mathbf w)
=
(\lambda q)^{-s}
\sum_{\nu=0}^{q-1}\sum_{k=0}^{r-1}
c_{k,\nu}\,\zeta_H(s-k,\alpha_\nu),
\]
where $q=\operatorname{lcm}(p_1,\dots,p_r)$ and $\alpha_\nu>0$.
Moreover, by Lemma~\ref{lem:rank1_top_coeff_explicit}, the terms with shift
$k=r-1$ occur with the explicit top coefficient giving the constant
\[
\frac{\lambda^{1-2r}}{\{(r-1)!\}^2(p_1\cdots p_r)^2}
\]
in the mean square formula.
Expanding
\[
\int_1^T |\zeta_r(r-1/2+it,a,\mathbf w)|^2\,dt
\]
by means of the above finite decomposition, we reduce the problem to a finite
linear combination of mean square and mixed mean values of Hurwitz zeta functions.
The self-products of the top-shift terms $\zeta_H(s-r+1,\alpha_\nu)$ contribute
the main term of order $T\log T$, while all remaining terms contribute at most $O(T)$.
Using Lemma~\ref{lem:rank1_top_coeff_explicit} to identify the coefficient of the
top-shift part, we obtain
\[
\int_1^T \left|\zeta_r\left(r-\frac12+it,a,\mathbf w\right)\right|^2\,dt
=
\frac{\lambda^{1-2r}}{\{(r-1)!\}^2(p_1\cdots p_r)^2}\,T\log T+O(T).
\]
(ii) The case $r-1<\sigma<r-1/2$.
Again by Lemma~\ref{lem:rank1_hurwitz_decomp_explicit},
\[
\zeta_r(s,a,\mathbf w)
=
(\lambda q)^{-s}
\sum_{\nu=0}^{q-1}\sum_{k=0}^{r-1}
c_{k,\nu}\,\zeta_H(s-k,\alpha_\nu).
\]
Applying Lemma~\ref{lem:hz_mixed_mean} to the mixed mean values of the Hurwitz
zeta functions appearing in this finite sum, we see that the self-products of the
top-shift terms $\zeta_H(s-r+1,\alpha_\nu)$ contribute terms of order
$T^{2r-2\sigma}$, whereas all terms involving at least one lower shift
$0\le k\le r-2$ are of smaller order. Since $r-1<\sigma<r-1/2$, we have
$2r-2\sigma>1$, and hence the top-shift contribution dominates.
Therefore
\[
\int_1^T |\zeta_r(\sigma+it,a,\mathbf w)|^2\,dt
\asymp T^{2r-2\sigma}.
\]

\medskip

(II) The case $2\le d\le r$.
In this case, the asserted upper bounds follow immediately from
Theorem~\ref{thm:Barnes_bounds} by squaring and integrating.
Indeed, if $\sigma>r$, then
$
\zeta_r(\sigma+it,a,\mathbf w)\ll 1,
$
hence
\begin{align*}\label{llT}
    \int_1^T |\zeta_r(\sigma+it,a,\mathbf w)|^2\,dt \ll T.
\end{align*}
If $\sigma=r$, then
$
\zeta_r(r+it,a,\mathbf w)\ll \log t,
$
and therefore
\[
\int_1^T |\zeta_r(r+it,a,\mathbf w)|^2\,dt
\ll
\int_1^T (\log t)^2\,dt
\ll
T(\log T)^2.
\]
Finally, if $r-1<\sigma<r$, then
$
\zeta_r(\sigma+it,a,\mathbf w)\ll t^{\,r-\sigma},
$
so that
\[
\int_1^T |\zeta_r(\sigma+it,a,\mathbf w)|^2\,dt
\ll
\int_1^T t^{2r-2\sigma}\,dt
\ll
T^{2r-2\sigma+1}.
\]
This proves the theorem.
\qed

\bigskip
\noindent
\textbf{Remark.}\\
In the case $2 \le d \le r$, we use the upper bound
in Theorem~\ref{thm:Barnes_bounds}. 
As an illustration, suppose that $w_1,\dots,w_r$ are algebraic numbers.
TThen, by Schmidt's Subspace Theorem, one has the following property:
for any $\varepsilon>0$, all integer vectors
$\mathbf{k}\in\mathbb{Z}^r$ satisfying
\[
0<|(\mathbf{m}-\mathbf{n})\cdot\mathbf{w}|
< \|\mathbf{m}-\mathbf{n}\|^{-d+1-\varepsilon}
\]
lie in finitely many proper subspaces of $\mathbb{Q}^r$.
Using this Diophantine estimate, one may expect that
\[
\int_1^{T} |\zeta_r(\sigma+it, a, \mathbf{w})|^2 dt
\ll T^{2r-2\sigma+d+\varepsilon}
\]
for any $\varepsilon>0$.
Since $d \ge 2$, this bound is worse than the trivial estimate
\[
\int_1^T |\zeta_r(\sigma+it,a,\mathbf w)|^2 dt \ll T^{2r-2\sigma+1}.
\]
This shows that the method based on the decomposition
into diagonal and off-diagonal terms is not effective
in the higher-rank case.
To obtain sharper results, a different approach would be required.
We leave this as a problem for future investigation.

\bigskip

\section*{Acknowledgments}
At the Kagawa Seminar in February 2026, Professor Masahiro Mine provided extremely valuable insights that contributed significantly to the further development of this paper. I would like to take this opportunity to express my deepest gratitude.

\bigskip

%%%%%%%%%%%%%%%%%%%%%%%%%%%%%%
\bibliographystyle{amsalpha}
\bibliography{References} 

\end{document}